  \def\Xint#1{\mathchoice
    {\XXint\displaystyle\textstyle{#1}}%
    {\XXint\textstyle\scriptstyle{#1}}%
    {\XXint\scriptstyle\scriptscriptstyle{#1}}%
    {\XXint\scriptscriptstyle\scriptscriptstyle{#1}}%
    \!\int}
    \def\XXint#1#2#3{{\setbox0=\hbox{$#1{#2#3}{\int}$ }
    \vcenter{\hbox{$#2#3$ }}\kern-.6\wd0}}
    \def\dashint{\Xint-}
\newcommand{\Id}{{\rm Id}\,}
\newtheorem{thm}{Theorem}[section]
\newtheorem{lem}[thm]{Lemma}
\newtheorem{rem}[thm]{Remark}
\newtheorem{cor}[thm]{Corollary}
\newtheorem{example}[thm]{Example}
\numberwithin{equation}{section}
\begin{document}
	\title{\Large\bf  Classification criteria for regular trees
		\footnotetext{\hspace{-0.35cm}
			$2010$ {\it Mathematics Subject classfication}: 31C05, 31C15, 31C45, 31E05
			\endgraf{{\it Key words and phases}:  capacity, harmonic function, parabolicity, regular tree}
			\endgraf{The author has been supported by the Academy of Finland (project No. 323960)
}
	}
	}
	\author{Khanh Nguyen \thanks{Department of Mathematics and Statistics, University of Jyv\"askyl\"a, PO~Box~35, FI-40014 Jyv\"askyl\"a, Finland. E-mail address: khanh.n.nguyen@jyu.fi}}
	
	\date{}
	\maketitle
	\begin{abstract}
		We give  characterizations for the parabolicity of regular trees.
	\end{abstract}
	
\section{Introduction} 
\ \ \ \  Let us begin with the uniformization theorem of F. Klein, P. Koebe and H. Poincar\'e for Riemann surfaces. The celebrated theorem says that every simply connected Riemann surface $M$ is conformally equivalent (or bi-holomorphic) to one of three Riemann surfaces: the half plane $\mathbb{H}^2$ (surface of hyperbolic type), the complex plane $\mathbb{R}^2$ (surface of parabolic type),  the Riemann sphere $\mathbb{S}$ (surface of elliptic type). Then $M$ admits a Riemannian metric $g$ with constant curvature.
A simply connected Riemann surface is said to be \textit{hyperbolic} if it is conformally equivalent to $\mathbb{H}^2$, otherwise we say that it is \textit{parabolic}.

Let $M$ be a simply connected Riemann surface with Riemannian metric $g$. A $C^2$-smooth function $u$ defined in $M$ is \textit{superharmonic} if 
\[-\Delta u\geq 0
\]where $\Delta$ is the Laplace-Beltrami operator associated to  the Riemannian metric $g$.

It is well known that every conformal mapping in dimension two preserves superharmonic functions (see \cite[Page 135]{AS60}). Since $\mathbb{H}^2$ possesses a nonconstant nonnegative superharmonic function and every nonnegative superharmonic function on $\mathbb{R}^2$ or $\mathbb{S}$ is constant, it then follows that there is no  nonconstant nonnegative superharmonic function on $(M,g)$ if and only if $M$ is parabolic.

Let $K$ be a compact  subset in $(M,g)$. We define the \textit{capacity} $\text{\rm Cap}(K)$ by 
\[\text{\rm Cap}(K)=\inf \left \{\int_M |\nabla u|^2 dm_g: u \in \text{\rm Lip}_0(M), u|_{K}\equiv 1\right \}
\]where  $\text{\rm Lip}_0(M)$ is a set of all Lipschitz functions with  compact support on $M$, and $m_g$ is the Riemannian measure associated to $g$. 
Then there is a nonconstant nonnegative superharmonic function on $M$ if and only if $\textit{\rm Cap}(K)>0$ for some compact subset $K$, (see \cite[Theorem 5.1]{G99} for Riemannian manifolds). It follows that the parabolicity of a Riemann surface $M$ can be characterized both in terms of capacity and superharmonic functions. By this reason, in the setting of Riemannian manifolds or metric measure spaces, one defines  parabolicity either via capacity (see \cite{H90,H3,Pekka01,HMP}) or via superharmonic functions (see \cite{G99} and also references therein). In this paper, we will consider  $K$-regular trees and give the definition of parabolicity  in terms of capacity.

	Recently, analysis on $K$-regular trees has been under development, see \cite{BBGS, PKW, PW19, KZ,pekkakhanhzhuang,Wang}. Let $G$ be a  $K$-regular  tree with  a set  of vertices $V$ and a set  of edges $E$ for some $K\geq 1$. The union of $V$ and $E$ will be  denoted  by $X$. We abuse the notation and call $X$ a $K$-regular tree. We introduce a metric structure on $X$ by considering each edge of $X$ to be an isometric copy of the unit interval. Then the distance between two vertices is the number of edges needed to connect them and there is a unique geodesic that minimizes this number.
	Let us denote the root by $0$. If $x$ is a vertex, we define $|x|$ to be the distance between $0$ and $x$. Since each edge is an isometric copy of the unit interval, we may extend this distance naturally to any $x$ belonging to an edge.
	We define $\partial X$ as the collection of all infinite geodesics starting at the root $0$. Then every $\xi\in \partial X$ corresponds to an infinite geodesic $[0,\xi)$ (in $X$) that is an isometric copy of the interval $[0,\infty)$.
	Let $\mu$ and $\lambda: [0,\infty)\to (0,\infty)$ be  locally integrable functions. Let $d|x|$ be the length element on $X$. We define a measure $\mu$ on $X$ by setting $d\mu(x)=\mu(|x|)d|x|$, and a metric $d$ on $X$ via $ds(x)=\lambda(|x|)d|x|$ by setting $d(y,z)=\int_{[y,z]}ds$ whenever $y, z\in X$ and $[y,z]$ is the unique geodesic between $y$ and $z$.
	Then $(X, d, \mu )$ is a metric measure space and hence one may define a Newtonian Sobolev space $N^{1,p}(X):=N^{1,p}(X,d,\mu)$ based on upper gradients \cite{HK98, N00}. As usual, $N^{1,p}_0(X) $ is the completion of the family of functions with compact support 
	in $N^{1,p}(X)$, and $\dot N^{1,p}_0(X)$ is the completion of the family of functions with compact support in $\dot N^{1,p}(X)$, the homogeneous version of $N^{1,p}(X)$. Let $\Omega$ be a subset of $X$. We denote by $N^{1,p}_{\rm loc}(\Omega)$ the space of all functions $u\in L^p_{\rm loc}(\Omega)$ that have an upper gradient in $L^p_{\rm loc}(\Omega)$, where $L^p_{\rm loc}(\Omega)$ is the space of all measurable functions that are $p$-integrable on any compact subset of $\Omega$.
	See Section \ref{Sec2} for the precise definitions.
	
	Let $1<p<\infty$ and $O$ be a subset of $X$. We define the \textit{$p$-capacity} of $O$, denoted $\text{\rm Cap}_p(O)$, by setting
	\begin{equation}
	\label{def-cap}\text{\rm Cap}_p(O)=\inf\left \{\int_X g_u^pd\mu: u|_{O}\equiv 1, u\in N^{1,p}_0(X)\right \}
	\end{equation}
	where $g_u$ is the minimal upper gradient of $u$ as in Section \ref{sec22}. A $K$-regular tree $X$ is said to be \textit{$p$-parabolic} if $\text{\rm Cap}_p(O)=0$ for all compact sets $O\subset X$; otherwise $X$ is \textit{$p$-hyperbolic}.

	Given $1<p<\infty$ and an open subset $\Omega \subseteq X$, we say that $u\in N_{\text{\rm loc}}^{1,p}(\Omega)$ is a \textit{$p$-harmonic function} (or a \textit{$p$-superharmonic function}) on $\Omega$ if 
		\begin{equation}
		\label{suph}\int_{\text{\rm spt}(\varphi)}g_u^p d\mu \leq \int_{\text{\rm spt}(\varphi)} g_{u+\varphi}^p d\mu
		\end{equation}
	 holds for all functions (or for all nonnegative functions) $\varphi\in N^{1,p}(\Omega)$ with compact support  $\text{\rm spt}(\varphi)\subset \Omega$. We refer the interested readers to \cite{BB11, tero, HKST15} for a discussion on the $p$-capacity and $p$-(super)harmonic functions.
		
		 Since a $K$-regular tree $(X,d)$ is the quintessential Gromov hyperbolic space, it is then 
		natural to ask for whether the parabolicity (or hyperbolicity) of $X$ can be characterized via $p$-(super)harmonic functions under some conditions on the measure $\mu$ only depending on the given metric $d$, and also ask for intrinsic conditions of $K$-regular trees that would characterize the parabolicity (or hyperbolicity). We refer  the readers to \cite[Chapter IV]{AS60}
		for a discussion in the case of Riemann surfaces, and \cite {GM99, G99, H3, Pekka01, HMP} for a discussion in the setting of Riemannian manifolds, and \cite[Section 6]{T99}, \cite{Y77}  for a discussion on infinite networks.

	In order to state our results, we introduce a notion from  \cite{PKW}. Let $1<p<\infty$. We set 
		\[R_p(\lambda,\mu)=\int_0^{\infty} \lambda(t)^{\frac{p}{p-1}}\mu(t)^{\frac{1}{1-p}}K^{\frac{ j(t) }{1-p}} dt\]
where $j(t)$ is the smallest integer such that $j(t)\geq t$, and let $X^n=\{x\in X: |x|\leq n\}$ for each $n\in \mathbb{N}$. Since we work with a fixed pair $\lambda, \mu$, we will usually write $R_p(\lambda,\mu)$ simply as $R_p$ when no confusion can arise. In what follows, we additionally assume that $\lambda^p\mu^{-1}\in L^{{1}/(p-1)}_{\rm loc}([0,\infty))$ to make sure that the finiteness of $R_p$ is a condition at infinity.

The first  result of our paper is a characterization of parabolicity of $K$-regular trees.
	\begin{thm}\label{main theorem}Let $1<p<\infty$ and $X$ be a $K$-regular tree with metric $d$ and measure $\mu$ as above, {with $K\geq 1$}. Then $(X,d,\mu )$ is p-parabolic if and only if any one of the following conditions is fulfilled:
		\begin{enumerate}
			\item $R_p(\lambda,\mu)=\infty$.
			\item $\text{\rm Cap}_p(X^n)=0$ for all $n\in \mathbb{N}\cup \{0\}$.
			\item $\text{\rm Cap}_p(X^n)=0$ for some $n\in \mathbb{N}\cup \{0\}$.
		\end{enumerate}
	\end{thm}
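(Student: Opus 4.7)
My plan is to establish the explicit formula
\begin{equation*}
\text{\rm Cap}_p(X^n)=\left(\int_n^\infty \lambda(t)^{\frac{p}{p-1}}\mu(t)^{\frac{1}{1-p}}K^{\frac{j(t)}{1-p}}\,dt\right)^{1-p}
\end{equation*}
for every $n\in\mathbb{N}\cup\{0\}$, with the right-hand side read as $0$ when the bracketed tail integral diverges. Granting this formula, the theorem follows at once. The standing hypothesis $\lambda^p\mu^{-1}\in L^{1/(p-1)}_{\rm loc}([0,\infty))$ makes $R_p(\lambda,\mu)$ differ from its tail past any fixed level $n$ by a finite quantity, so the tail is infinite iff $R_p=\infty$; this produces the equivalences $(1)\Leftrightarrow(2)\Leftrightarrow(3)$ simultaneously. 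The link with $p$-parabolicity is routine: since $|\cdot|\colon X\to[0,\infty)$ is proper, every compact $O\subset X$ lies in some $X^n$, and monotonicity of the $p$-capacity then forces $p$-parabolic $\Leftrightarrow$ (2).

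\textbf{Reduction to radial competitors and the H\"older calculation.} To prove the displayed identity, I would first use the symmetry of $X$ to reduce the infimum in \eqref{def-cap} to radial functions. Given admissible $u\in N^{1,p}_0(X)$, I average $u$ over the group of root-preserving automorphisms of $X$, which acts transitively on every level set $\{|x|=t\}$, to obtain a radial admissible competitor $\tilde u(x)=f(|x|)$ whose $p$-energy does not exceed that of $u$, by Jensen's inequality applied to the convex map $s\mapsto|s|^p$. For such $f$, with $f\equiv 1$ on $[0,n]$ and $f\to 0$ at infinity, the cross-section $\{|x|=t\}$ of $X$ consists of exactly $K^{j(t)}$ edges, so a direct calculation gives
\begin{equation*}
\int_X g_{\tilde u}^p\,d\mu=\int_n^\infty \frac{|f'(t)|^p}{\lambda(t)^p}\,\mu(t)\,K^{j(t)}\,dt.
\end{equation*}
Writing $1=-\int_n^\infty f'(t)\,dt$ and factoring the integrand as the product $(-f'\lambda^{-1}\mu^{1/p}K^{j(t)/p})\cdot(\lambda\mu^{-1/p}K^{-j(t)/p})$, H\"older's inequality with exponents $p$ and $p/(p-1)$ yields the lower bound $\text{\rm Cap}_p(X^n)\geq (R_p^n)^{1-p}$, where $R_p^n$ denotes the tail integral above. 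The matching upper bound comes from choosing $-f'$ proportional to $\lambda^{p/(p-1)}\mu^{1/(1-p)}K^{j(t)/(1-p)}$ on $[n,N]$ and $f\equiv 0$ beyond $N$, then letting $N\to\infty$; these truncated competitors also show $\text{\rm Cap}_p(X^n)=0$ whenever $R_p^n=\infty$.

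\textbf{Main obstacle.} The delicate step is the symmetrization reducing to radial competitors. One must verify that the averaged function remains in $N^{1,p}_0(X)$ with appropriately controlled compact support, and that its minimal upper gradient is dominated in $L^p$ by the averages of $g_u$ along the corresponding orbits; both points rely on the precise homogeneity of the $K$-regular tree (the same branching constant at every level, so that the averaging operator respects the measure and upper gradients). In view of the very similar radial reductions already used in \cite{PKW,pekkakhanhzhuang}, I would adapt the symmetrization argument from those papers rather than reproving it in full. Once the radial reduction is available, the remaining H\"older computation and the explicit extremal are a routine one-variable exercise.
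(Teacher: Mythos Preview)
Your approach is correct and in fact proves more than the paper does: an exact value for $\text{\rm Cap}_p(X^n)$, not merely the dichotomy zero versus nonzero. The paper's route differs on the implication $\text{\rm Cap}_p(X^n)=0\Rightarrow R_p=\infty$. Rather than obtaining a lower bound for the capacity via symmetrization and H\"older, the paper argues functionally: from a sequence $u_k\in N^{1,p}_0(X)$ with $u_k|_{X^n}\equiv 1$ and $\int_X g_{u_k}^p\,d\mu\to 0$ one sees that $u_k\to 1$ in $\dot N^{1,p}(X)$, hence $1\in\dot N^{1,p}_0(X)$, and then invokes \cite[Theorem~1.3 and Corollary~4.2]{PKW} for the equivalence $1\in\dot N^{1,p}_0(X)\Leftrightarrow R_p=\infty$. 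For $R_p=\infty\Rightarrow\text{\rm Cap}_p(X^n)=0$ the paper uses precisely the truncated radial test functions you describe, and the equivalence of $p$-parabolicity with condition~(2) via compactness of the level sets $X^n$ is handled identically in both treatments. The trade-off is that the paper's argument is softer but black-boxes a density theorem from \cite{PKW}, whereas your argument is self-contained once the symmetrization is in hand and delivers the sharp capacity formula; the cost is carrying out the averaging over the root-preserving automorphism group (a compact profinite group with Haar probability measure, so the Jensen step and the preservation of compact support and admissibility go through cleanly).
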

 In Section \ref{sec21}, we will show that the compactness on a $K$-regular tree $X$ with respect to our metric $d$ and with respect to the graph metric are equivalent. Since each compact set in $(X,d)$ is contained in some $n$-level set $X^n$ that is an analog of a ball with respect to graph metric,  parabolicity of $X$ can be characterized by the zero $p$-capacity of some/ all $n$-level sets $X^n$.
 
 In \cite[Theorem 1.3]{PKW}, the condition $R_p(\lambda,\mu)=\infty$ gives a characterization of the existence of boundary  trace operators and for density properties for $\dot N^{1,p}(X)$. Hence parabolicity of $K$-regular trees can be characterized in terms of boundary trace operators and density properties. Combining Theorem \ref{main theorem} and \cite[Theorem 1.3 and Theorem 3.5]{PKW}, we obtain the following corollary.
\begin{cor} Let $1<p<\infty$ and $X$ be a $K$-regular tree with metric $d$ and measure $\mu$ as above, {with $K\geq 1$}. Then $(X,d,\mu)$ is $p$-parabolic if and only if any one of the following conditions is fulfilled:
	\begin{enumerate}
		\item There exists $u\in \dot N^{1,p}(X)$ such that
		\[\lim_{[0,\xi)\ni x\to \xi }u(x)=\infty\]
		 for all $\xi \in \partial X$.
		\item $\dot N^{1,p}_0(X)=\dot N^{1,p}(X)$.
	\end{enumerate}
	\end{cor}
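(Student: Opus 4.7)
My plan is to chain Theorem~\ref{main theorem} with \cite[Theorem 1.3]{PKW} and \cite[Theorem 3.5]{PKW}, reducing everything to the analytic condition $R_p(\lambda,\mu)=\infty$. By Theorem~\ref{main theorem}, $p$-parabolicity of $(X,d,\mu)$ is equivalent to $R_p(\lambda,\mu)=\infty$, so it suffices to prove that each of (1) and (2) is itself equivalent to $R_p(\lambda,\mu)=\infty$.

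The equivalence (2) $\Leftrightarrow$ $R_p(\lambda,\mu)=\infty$ is precisely \cite[Theorem 3.5]{PKW}, which characterizes the density $\dot N^{1,p}_0(X)=\dot N^{1,p}(X)$ in the homogeneous Newtonian space exactly by the divergence of $R_p(\lambda,\mu)$; nothing further is needed.

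For (1), I would use \cite[Theorem 1.3]{PKW} to identify $R_p(\lambda,\mu)<\infty$ with the existence of a nontrivial bounded boundary trace operator on $\dot N^{1,p}(X)$. In the direction $R_p<\infty \Rightarrow \neg(1)$, this immediately gives finite radial limits $\lim_{[0,\xi)\ni x\to\xi}u(x)$ for $\nu$-almost every $\xi\in\partial X$ and every $u\in\dot N^{1,p}(X)$, contradicting the demand in (1) that the limit equal $+\infty$ at \emph{every} $\xi$. For the converse $R_p=\infty\Rightarrow(1)$, I would construct the required $u$ as the radial function $u(x)=f(|x|)$ with
\begin{equation*}
f(r)=\log F(r), \qquad F(r)=\int_0^r \lambda(t)^{p/(p-1)}\mu(t)^{-1/(p-1)}K^{-j(t)/(p-1)}\,dt
\end{equation*}
for $r$ large, extended to be constant near the root. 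A direct calculation using the radial minimal upper gradient $g_u(x)=f'(|x|)/\lambda(|x|)$ and the tree mass formula telescopes the $p$-energy of $u$ to (a constant multiple of) $\int F'(r)/F(r)^p\,dr$, which is finite, while $F(r)\to\infty$ (ensured by $R_p=\infty$) forces $f(|x|)=\log F(|x|)\to\infty$ along every geodesic $[0,\xi)$.

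The main subtlety is selecting the logarithmic profile $f=\log F$ in the converse construction: $f$ must diverge only barely so that the $p$-energy remains finite, and this balancing is made possible precisely by the divergence of $R_p(\lambda,\mu)$. Radial symmetry of $u$ then upgrades ``divergence along some ray'' to ``divergence along every ray''. Everything else is a direct chain of equivalences through \cite[Theorem 1.3 and Theorem 3.5]{PKW}.
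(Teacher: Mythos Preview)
Your proposal is correct and follows essentially the same route as the paper: reduce $p$-parabolicity to $R_p(\lambda,\mu)=\infty$ via Theorem~\ref{main theorem}, and then invoke \cite[Theorem~1.3 and Theorem~3.5]{PKW} to pass between $R_p=\infty$ and conditions~(1) and~(2). The paper states this as an immediate corollary without further detail; your version differs only in that you unpack part of the content of \cite{PKW} by writing down the explicit radial witness $u(x)=\log F(|x|)$ for the implication $R_p=\infty\Rightarrow(1)$, which is harmless extra detail rather than a different strategy.
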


It is well known, see for instance the survey paper \cite{Pekka01}, that the volume growth condition
\[\int_1^\infty \left (\frac{t}{V(B(0,t))}\right )^{\frac{1}{p-1}}dt=\infty
\]is a sufficient condition to guarantee parabolicity of Riemannian manifolds. Here $V(B(0,t))$ is the volume of the ball with radius $t$ and center at a fixed point $0$. However, this condition is far from being necessary in general, as shown by a counterexample due to I. Holopainen  \cite{H3} 
and to Varopoulos\cite{Varo} in the case $p=2$.
Our condition $R_p(\lambda,\mu)=\infty$ is an analog of this volume growth  condition. Example \ref{example} in Section \ref{Sec3} shows that there exists a $K$-regular tree with a distance $d$ and  a ``{non-radial}''  measure $\mu$  such that $R_p(\lambda,\mu)=\infty$ but $X$ is $p$-hyperbolic.

Let $1<p<\infty$ and let $\text{\rm int}(X^n):=\{x\in X:|x|<n\}$ for $n\in\mathbb N$. We  say that $(\text{\rm int}(X^n), d, \mu)$ is \textit{doubling} and supports \textit{a $p$-Poincar\'e inequality} if there exist  constants $C_1\geq 1, C_2>0$ only depending on $n$  such  that for all  balls  $B(x,2r)\subset \text{\rm int}(X^n)$, 
\[\mu\left (B(x,2r)\right )\leq C_1\mu\left (B(x,r)\right )\] 
and for all balls $B(x, r)\subset \text{\rm int}(X^n)$, \[\dashint_{B(x,r)}|u-u_{B(x,r)}|d\mu\leq C_2r\left (\dashint_{B(x, r)}g^pd\mu\right )^{\frac{1}{p}}\]whenever $u$ is a measurable function on $B(x, r)$ and $g$ is an upper gradient of $u$, where  $u_{B(x,r)}:=\dashint_{B(x,r)}ud\mu=\frac{1}{\mu(B(x,r))}\int_{B(x,r)}ud\mu.$ The validity of a $p$-Poincar\'e inequality for $X$ has very recently been characterized via a Muckenhoupt-type condition under a doubling condition on $(X,d,\mu)$, see \cite{KZ} for more information.

Our second result deals with a characterization of parabolicity in terms of $p$-(super)- harmonic functions.
\begin{thm}\label{main theorem2}
	Let $1<p<\infty$ and $X$ be a $K$-regular tree with metric $ d$ and measure $\mu$ as above, with $K\geq 2$. Assume additionally that $(\text{\rm int}(X^n), d, \mu)$ is doubling and supports a $p$-Poincar\'e inequality for each $n\in \mathbb{N}$. Then $(X, d, \mu)$ is p-parabolic if and only if any one of the following conditions is fulfilled:
	\begin{enumerate}
					\item Every nonnegative $p$-superharmonic function $u$ on $X$ is constant.
					\item Every nonnegative $p$-harmonic function $u$ on $X$ is constant.
					\item Every bounded $p$-harmonic function $u$ on $X$ is constant.
					\item Every bounded $p$-harmonic function $u$ on $X$ with  $\int_Xg_u^pd\mu<\infty$ is constant.
	\end{enumerate}
\end{thm}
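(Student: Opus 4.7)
The plan is to close the cycle $p$-parabolic $\Rightarrow (1) \Rightarrow (2) \Rightarrow (3) \Rightarrow (4) \Rightarrow p$-parabolic. The three middle links are essentially formal: (1)$\Rightarrow$(2) since every $p$-harmonic function satisfies the defining variational inequality for \emph{all} test functions and a fortiori for nonneg ones; (2)$\Rightarrow$(3) since, given a bounded $p$-harmonic $u$, the shift $u+M$ is $p$-harmonic for every constant $M$ and nonneg for $M$ large, hence constant by (2), so $u$ is constant; and (3)$\Rightarrow$(4) is a tautology.

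For $p$-parabolic $\Rightarrow (1)$, let $u \geq 0$ be $p$-superharmonic on $X$. Since each truncation $\min(u, M)$ is again $p$-superharmonic and $u$ is constant provided every such truncation is, I may assume $0 \leq u \leq M$. The function $v := M - u$ is then nonneg and $p$-subharmonic, since replacing $\varphi$ by $-\psi$ in (suph) for $u$ and using $g_{v+\psi} = g_{u-\psi}$ gives $\int g_v^p d\mu \leq \int g_{v+\psi}^p d\mu$ for every nonpositive $\psi$ of compact support. By Theorem \ref{main theorem}, $p$-parabolicity is equivalent to $\text{\rm Cap}_p(X^n) = 0$ for every $n$, so I pick cutoffs $\eta_k \in N_0^{1,p}(X)$ with $\eta_k \equiv 1$ on a fixed $X^n$, $0 \leq \eta_k \leq 1$, and $\int_X g_{\eta_k}^p d\mu \to 0$ as $k \to \infty$. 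Testing the subharmonic inequality of $v$ against $\psi = -\eta_k^p v \leq 0$ and using the Leibniz estimate $g_{v(1-\eta_k^p)} \leq (1-\eta_k^p) g_v + p v \eta_k^{p-1} g_{\eta_k}$ together with the pointwise bound $(1-\eta_k^p)^p \leq 1-\eta_k^p$, an $\varepsilon$-Young absorption, and $g_v \in L^p_{\rm loc}(X)$ yields the localized Caccioppoli estimate
\[\int_{X^n} g_v^p d\mu \leq C(p) M^p \int_X g_{\eta_k}^p d\mu \longrightarrow 0 \text{ as } k \to \infty.\]
Hence $g_v \equiv 0$ on $X^n$ for every $n$, and connectedness of $X$ forces $v$, and therefore $u$, to be constant.

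For (4)$\Rightarrow p$-parabolic I argue by contrapositive: assuming $X$ is $p$-hyperbolic, I construct a nonconstant bounded $p$-harmonic function on $X$ with finite $p$-energy. The corollary after Theorem \ref{main theorem} gives $\dot N^{1,p}_0(X) \subsetneq \dot N^{1,p}(X)$, and the trace/extension theory of \cite{PKW} supplies a boundary trace operator $T \colon \dot N^{1,p}(X) \to B$ onto a Besov-type space $B$ on $\partial X$ with $\ker T = \dot N^{1,p}_0(X)$. Since $K \geq 2$, $\partial X$ contains at least two distinct geodesic rays, so I pick a bounded Borel $\phi \colon \partial X \to \real$ that is not a.e.\ constant and use \cite{PKW} to extend it to a bounded $v \in \dot N^{1,p}(X)$ with $Tv = \phi$. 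Minimizing $w \mapsto \int_X g_w^p d\mu$ over the affine set $\{w : w - v \in \dot N^{1,p}_0(X)\}$ produces a unique minimizer $u$ by weak lower semicontinuity and strict convexity of the energy; the doubling and $p$-Poincar\'e hypothesis on each $\text{\rm int}(X^n)$, applied via exhaustion, makes this rigorous. By the Euler--Lagrange equation $u$ is $p$-harmonic on $X$, by the comparison principle $\inf v \leq u \leq \sup v$, and $\int_X g_u^p d\mu \leq \int_X g_v^p d\mu < \infty$. If $u$ were a constant $c$, then $v - c \in \ker T = \dot N^{1,p}_0(X)$ would force $\phi = Tv \equiv c$ on $\partial X$, contradicting the choice of $\phi$. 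Hence $u$ violates (4).

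The main obstacle is the last step: carrying out the global Dirichlet solvability on the noncompact tree $X$ from merely local doubling and Poincar\'e hypotheses, and invoking the trace theory of \cite{PKW} in the sharp form $\ker T = \dot N^{1,p}_0(X)$ so that constancy of $u$ is ruled out by the nonconstancy of the trace $\phi$. The parabolic $\Rightarrow (1)$ direction, by contrast, is a self-contained Caccioppoli argument that does not use the doubling/Poincar\'e structure.
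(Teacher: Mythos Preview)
Your cycle and the three formal implications match the paper exactly. For $p$-parabolic $\Rightarrow (1)$ you run a Caccioppoli argument on the subminimizer $M-u$; the paper's Lemma~\ref{lem2} instead tests superharmonicity of $u$ directly with $\varphi_n=\max\{M\cdot 1_n,u\}-u$, and in Remark~\ref{remark} gives a second proof via the logarithmic Caccioppoli inequality for $u+1$. All three variants are equivalent in spirit and none uses the doubling/Poincar\'e hypothesis, so this direction is fine.

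The real divergence is in $(4)\Rightarrow p$-parabolic. The paper does \emph{not} touch the trace operator here. Instead (Lemmas~\ref{lem5}--\ref{lem6}) it splits $X$ into two subtrees $T_1$ and $X\setminus T_1$ at a child of the root, solves for each $n$ the relative capacity problem $\text{Cap}_p(E_n,F_n)$ with $E_n=(X\setminus X^n)\cap T_1$, $F_n=(X\setminus X^n)\setminus T_1$, and obtains $p$-harmonic minimizers $u_n$ on $\text{int}(X^n)$ with $0\le u_n\le 1$ and $0<M_2\le\int g_{u_n}^p\le M_1<\infty$ uniformly in $n$. The local doubling/Poincar\'e hypothesis enters precisely here, via the reflexivity of $N^{1,p}(\text{int}(X^{n+1}))$ needed to produce each $u_n$, and via the local H\"older continuity (Theorem~\ref{theorem26}) that feeds Arzel\`a--Ascoli. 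A diagonal limit $u$ is then $p$-harmonic on $X$, bounded, of finite energy, and a separate argument using the uniform lower bound $M_2>0$ and monotonicity of the $u_n$ along edges shows $u$ is nonconstant.

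Your route through the trace theory of \cite{PKW} is conceptually clean but, as you yourself flag, the global Dirichlet minimization over $v+\dot N^{1,p}_0(X)$ is not justified by the hypotheses as stated: you only have doubling and Poincar\'e on each $\text{int}(X^n)$, not on $X$, so neither reflexivity of $\dot N^{1,p}(X)$ nor a global comparison principle is available off the shelf. Saying ``applied via exhaustion, makes this rigorous'' is precisely the content of the paper's Lemmas~\ref{lem4}--\ref{lem6}, and once you carry that out you have essentially reproduced the paper's construction (the boundary datum $\phi$ then plays the same role as the $E_n/F_n$ split). You also need from \cite{PKW} the sharp identification $\ker T=\dot N^{1,p}_0(X)$ and that the Besov trace space contains a bounded nonconstant function; both are true but should be cited explicitly rather than asserted. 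In short: the easy half is correct, the hard half points in a workable direction but the step you label ``the main obstacle'' is exactly where the paper invests its effort, via a different and more self-contained mechanism.
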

Let us close the introduction with some comments on Theorem \ref{main theorem2}. According to a version of Theorem \ref{main theorem2} in  the setting of Riemannian manifolds from \cite{H90, H91, H93} we have that $1.\Rightarrow 2.\Rightarrow 3.\Rightarrow 4.$ However $3.$ does not imply $2.$ in general. 
Our condition that $(\text{\rm int}(X^n),d,\mu)$ is  doubling and supports a $p$-Poincar\'e inequality for each $n\in\mathbb{N}$ is equivalent to $\mu$ being a locally doubling measure supporting a local $p$-Poincar\'e inequality on $(X,d)$.

Theorem \ref{main theorem2} is not empty in the sense that there exist both $p$-parabolic and $p$-hyperbolic $K$-regular trees that are doubling and support a $p$-Poincar\'e inequality,
see Example \ref{example2} in Section \ref{Sec3} for more details.

The motivation for our paper comes from classification problems of spaces.
By the survey papers \cite{note, G99}, the development of potential theory in the setting of metric measure spaces leads to a classification of spaces as either $p$-parabolic or not. This dichotomy can be seen as a non-linear analog of the recurrence or transience dichotomy in the theory of Brownian motion. This classification is helpful in the development of a quasiconformal uniformization theory, or for a deeper understanding of the links between the geometry of hyperbolic spaces and the analysis on their boundaries at infinity.

The paper is organized as follows. In Section \ref{Sec2}, we introduce  $K$-regular trees, Newtonian spaces, and $p$-(super)harmonic functions on our trees. In Section \ref{Sec3}, we give the  proofs of Theorem \ref{main theorem} and Theorem \ref{main theorem2}.

	Throughout  this paper, the letter $C$ (sometimes  with a subscript)  will denote positive constants that usually depend only on the  space and may change at  different occurrences; if $C$  depends on $a, b, \ldots$, we  write $C=C(a,b, \ldots)$. For  any function $f\in L^1_{\rm loc}(X)$ and any measurable subset $A\subset X$,  let  $ \dashint_Afd\mu$ stand for  $\frac{1}{\mu(A)}\int_Afd\mu$.
	\section{Preliminaries}\label{Sec2}

	\subsection{Regular trees}\label{sec21}
\ \ \ \ 	{A {\it graph} $G$ is a pair $(V, E)$, where $V$ is a set of vertices and $E$ is a set of edges.   We call a pair of vertices $x, y\in V$  neighbors if $x$ is connected to $y$ by an edge. The degree of a vertex is the number of its neighbors. The graph structure gives rise to a natural connectivity structure. A {\it tree} $G$ is a connected graph without cycles. A graph (or tree) is made into a metric graph by considering each edge as a geodesic of length one.
		
		We call a tree $G$ a {\it rooted tree} if it has a distinguished vertex called the {\it root}, which we will denote by $0$. The neighbors of a vertex $x\in V$ are of two types: the neighbors that are closer to the root are called {\it parents} of $x$ and all other  neighbors  are called {\it children} of $x$. Each vertex has a unique parent, except for the root itself that has none. 
		
		We say that a tree is  {\it $K$-regular} if it is a rooted tree such that each vertex has exactly $K$ children for some integer $K\geq 1$.
		Then all vertices except the root  of  a $K$-regular tree have degree $K+1$, and the root has degree $K$. 
		
		Let $G$ be a $K$-regular tree with a set of vertices $V$ and a set of edges $E$ for some integer $K\geq 1$. For simplicity of notation, we let $X=V\cup E$ and call it a $K$-regular tree.
		For $x\in X$, let $|x|$ be the distance from the root $0$ to $x$, that is, the length of the geodesic from $0$ to $x$, where the length of every edge is $1$ and we consider each edge to be an isometric copy of the unit interval. The geodesic connecting two points $x, y\in X$ is denoted by $[x, y]$. Throughout this paper, we denote $X^n:=\{x\in X: |x|\leq n\}$ and $\textit{\rm int}(X^n):=\{x\in X:|x|<n\}$ for each $n\in\mathbb N$.
		
		On our $K$-regular tree $X$, we define a measure $\mu$ and a metric $d$ via $ds$ by setting
		\begin{equation*}
		d\mu(x)=\mu(|x|)\,d|x|,\ \  ds(x)=\lambda(|x|)\, d|x|,
		\end{equation*}
		where $\lambda, \mu:[0, \infty)\rightarrow (0, \infty)$ are fixed with $\lambda, \mu\in L^1_{\rm loc}([0, \infty))$.   
		Here $d\,|x|$ is the measure which gives each edge Lebesgue measure $1$, as we consider each edge to be an isometric copy of the unit interval and the vertices are the end points of this interval. Hence for any two points $z, y\in X$, the distance between them is 
		\[d(z, y)=\int_{[z, y]} \,ds(x)=\int_{[z, y]}\lambda(|x|)\, d|x|\]
		where $[z, y]$ is the unique geodesic from $z$ to $y$ in $X$.
		
		We abuse the notation and let $\mu(x)$ and $\lambda(x)$ denote $\mu(|x|)$ and $\lambda(|x|)$, respectively, for any  $x\in X$, if there is no danger of confusion.
		
		We denote by $d_E$ the graph metric on $X$. Then for any two points $z,y\in X$,
		\[d_E(z,y)=\int_{[z,y]}d|x|
				\] is  the graph distance between $z$ and $y$ where $[z,y]$ is the unique geodesic from $z$ to $y$.
		\begin{thm}\label{lemma-topo} The identity mapping $\Id_X: (X, d_E)\to (X, d)$ is a homeomorphism. 
		\end{thm}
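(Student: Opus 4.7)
The plan is to verify that $\Id_X$ is continuous in both directions; this suffices since the map is a set-theoretic bijection. Set $\Lambda(t) := \int_0^t \lambda(s)\,ds$. By the hypotheses that $\lambda:[0,\infty)\to(0,\infty)$ is positive and locally integrable, $\Lambda$ is continuous and strictly increasing on $[0,\infty)$. The workhorse identity, following from the additivity of line integrals along geodesics and the piecewise monotonicity of $|\cdot|$ on a tree geodesic, is that for $x, y \in X$ with nearest common ancestor $v$ (the point of minimum level on $[x, y]$),
\[
d(x, y) = \bigl(\Lambda(|x|) - \Lambda(|v|)\bigr) + \bigl(\Lambda(|y|) - \Lambda(|v|)\bigr).
\]

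For continuity of $\Id_X : (X, d_E) \to (X, d)$ at a point $x_0$: if $d_E(x_0, y) < \delta$, the triangle inequality for $d_E$ yields $|y| \leq |x_0| + \delta$ and $|v| \geq \max(|x_0| - \delta, 0)$, and substituting into the identity gives
\[
d(x_0, y) \leq 2\bigl[\Lambda(|x_0| + \delta) - \Lambda(\max(|x_0| - \delta,\, 0))\bigr],
\]
which tends to $0$ as $\delta \to 0^+$ by continuity of $\Lambda$.

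For continuity of $\Id_X : (X, d) \to (X, d_E)$ at $x_0$: fix $\varepsilon \in (0, 1)$. The $d_E$-sphere $S := \{y' \in X : d_E(x_0, y') = \varepsilon\}$ is finite, since the $d_E$-ball of radius $\varepsilon$ around $x_0$ is a finite union of line segments (thanks to $\varepsilon < 1$ and the unit edge lengths of the $K$-regular tree), each contributing a single point to $S$ at its far endpoint. Every $y' \in S$ has $d(x_0, y') > 0$ because $y' \neq x_0$ and $\lambda > 0$, so $\delta := \min_{y' \in S} d(x_0, y') > 0$. For any $y$ with $d_E(x_0, y) \geq \varepsilon$, the geodesic $[x_0, y]$ contains some $y' \in S$, and additivity of $d$ along geodesics gives $d(x_0, y) \geq d(x_0, y') \geq \delta$. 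Contrapositively, $d(x_0, y) < \delta$ forces $d_E(x_0, y) < \varepsilon$. The main (mild) obstacle is the case analysis for $x_0$ being a vertex, the root, or an interior point of an edge when verifying that small $d_E$-balls are finite unions of segments; all cases follow directly from the $K$-regular tree geometry.
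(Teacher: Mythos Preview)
Your proof is correct and follows essentially the same strategy as the paper: establish continuity of the identity in both directions, using the continuity of $t\mapsto\int_0^t\lambda$ for $(X,d_E)\to(X,d)$ and the strict positivity of $\lambda$ for $(X,d)\to(X,d_E)$. The only cosmetic difference is in the second direction, where the paper writes down an explicit $\delta_{r'}=\min\bigl\{\int_{|x|-r'/3}^{|x|}\lambda,\ \int_{|x|}^{|x|+r'/3}\lambda\bigr\}$ and controls the levels of points along the geodesic, while you take $\delta$ as the minimum of $d(x_0,\cdot)$ over the finite $d_E$-sphere of radius $\varepsilon<1$; both arguments encode the same idea.
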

		\begin{proof} Let us first prove that the identity mapping  $f:(X,d_E)\to (X,d)$, $f(x)=x$ if $x\in X$, is continuous.
			Let $B_d(x,r)$ be an arbitrary open ball with center $x$ and radius $r>0$ in $(X,d)$. Recall that $\lambda: [0, \infty)\to (0, \infty)$ is a locally integrable function. Hence  $\lambda$ is an integrable function on $[a,b]$ whenever $[a,b]$ is a compact interval with  $|x|\in (a,b)$ if $x\neq 0$, or $|x|=a$ if $x=0$ where $0$ is the root of $X$. Then
			\[F(h):=\int_{a}^h\lambda(t)dt
			\]
			is absolutely continuous on $[a, b]$. It follows that there exists $\delta_r>0$ only depending on $x, r$ such that 
			\[
			\begin{cases}
			\int_{|x|-\delta_r}^{|x|+\delta_r} \lambda(t)dt <\frac{r}{2} & \text{ if }|x| \in (a, b), x\neq 0,\\ 
			\int_{|x|}^{|x|+\delta_r}\lambda(t)dt<\frac{r}{2} &\text{ if }|x|=0.
			\end{cases}
			\]
			The open ball with center $x$ and radius $\delta_r$ in $(X, d_E)$ is denoted by $B_{d_E}(x,\delta_r)$. For any $y\in B_{d_E}(x,\delta_r)$, we have that $[x,y]\subset [x,\bar x]\cup[\bar x,y]$ where $\bar x\in[0,x]$ with $d_E(x,\bar x)=\delta_r$. Then the above estimate gives that 
			\[\begin{cases}
			d(x,y)=\int_{[x,y]}\lambda(t)dt<2\int_{|x|-\delta_r}^{|x|+\delta_r}\lambda(t)dt <r& \text{ if }|x|\in (a,b), x\neq 0,\\
			d(x,y)=\int_{[x,y]}\lambda(t)dt<2\int_{|x|}^{|x|+\delta_r}<r &\text{ if }|x|=0,
			\end{cases}
			\]and hence $y\in B_d(x,r)$. As $B_d(x,r)$ is arbitrary, we obtain that for any open ball $B_d(x,r)$ there exists $\delta_r>0$ only depending on $x, r$ such that	  $B_{d_E}(x,\delta_r)\subset B_d(x,r).$	Thus
			\begin{equation}\label{fcontinuous}
			\text{the identity mapping } f: (X,d_E)\to (X,d) \text{ is continuous}.
			\end{equation}
			Next, we claim that also the identity mapping $g: (X,d)\to (X,d_E)$, $g(x)=x$ if $x\in X$, is continuous. Let $B_{d_E}(x,r')$ be an arbitrary open ball with center $x$ and radius $r'>0$  in $(X,d_E)$.  We set
			\begin{equation}
			\label{delta-r'}\delta_{r'}= \min \left \{\int_{|x|-r'/3}^{|x|}\lambda(t)dt , \int_{|x|}^{|x|+r'/3}\lambda(t)dt \right \}.
			\end{equation}
			Then $\delta_{r'}>0$ since $\lambda>0$.
			We denote by $B_d(x,\delta_{r'})$ the open ball with  center $x$ and radius $\delta_{r'}$ in $(X,d)$.
			For any $y\in B_d(x,\delta_{r'})$, we have that
\begin{equation}
\label{delta-r'1}\int_{[x,y]}\lambda(t)dt=d(x,y)<\delta_{r'}.
\end{equation}	
			It follows from \eqref{delta-r'} and \eqref{delta-r'1} that $|z|\in [|x|-r'/3, |x|+r'/3]$ for any $z\in [x,y]$, and hence $d_E(x,z)<r'$ for any $z\in[x,y]$. In particular, $d_E(x,y)<r'$ for any $y\in B_d(x,\delta_{r'})$.
			Then 
			$B_{d}(x,\delta_{r'})\subset B_{d_E}(x,r')
			$ for any $B_{d_E}(x,r')$. Therefore
			\begin{equation}\label{gcontinuous}
			\text{the identity mapping }g:(X,d)\to (X,d_E) \text{ is continuous.}
			\end{equation}
			We conclude from \eqref{fcontinuous} and \eqref{gcontinuous} that $\Id_X:(X,d_E)\to (X,d)$ is a  homeomorphism.
			The claim follows.
		\end{proof}
	We note that $X^n$ is compact in $(X,d_E)$ for each $n\in\mathbb{N}$ because it is a union of finitely many compact edges. Furthermore, any compact set in $(X,d_E)$ is contained in $X^n$ for some $n$ since any compact set in $(X,d_E)$ is bounded.  Since compactness is preserved under homeomorphisms, we have the following corollaries.
	\begin{cor}\label{cor2.2}
		Let $O$ be an arbitrary compact set in $(X,d)$. Then $O\subset X^n$ for some $n\in\mathbb{N}$.
	\end{cor}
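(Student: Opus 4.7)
The plan is to deduce this immediately from Theorem \ref{lemma-topo}, which provides the homeomorphism $\Id_X:(X,d_E)\to(X,d)$. First I would observe that the inverse $g=\Id_X^{-1}:(X,d)\to(X,d_E)$ is continuous by that theorem, so if $O$ is compact in $(X,d)$ then $g(O)=O$ is compact as a subset of $(X,d_E)$, since compactness is preserved under continuous maps.

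Next, I would reduce the conclusion to a statement purely about the graph metric: it suffices to show that every compact set in $(X,d_E)$ is contained in some $X^n$. For this, note that $X^n$ is a finite union of isometric copies of $[0,1]$ (the $K^0+K^1+\cdots+K^n$ edges of length one lying in the first $n$ generations), hence is itself compact in $(X,d_E)$, and $X=\bigcup_{n\in\mathbb N}X^n$ with $X^n\subset X^{n+1}$. Any compact set $O\subset(X,d_E)$ is bounded in the graph metric (e.g.\ by covering it with finitely many open $d_E$-balls centered at $0$ with radii in $\mathbb N$), and if $d_E(0,x)\leq N$ for all $x\in O$ then by definition $O\subset X^N$ after choosing $n=\lceil N\rceil$.

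Combining these two steps finishes the proof: start from $O$ compact in $(X,d)$, transport it by the homeomorphism to get $O$ compact in $(X,d_E)$, and then apply the boundedness observation to exhibit an $n\in\mathbb N$ with $O\subset X^n$. There is no real obstacle here; the only point to be careful about is separating the roles of the two metrics, i.e., making explicit use of Theorem \ref{lemma-topo} to pass from $d$-compactness to $d_E$-compactness rather than trying to argue directly in $(X,d)$, since the metric $d$ need not make the sets $X^n$ bounded in an obvious way when $\lambda$ is unbounded or degenerate.
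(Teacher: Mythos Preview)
Your proposal is correct and follows essentially the same approach as the paper: the paper observes (in the paragraph immediately preceding the corollary) that $X^n$ is compact in $(X,d_E)$ as a finite union of compact edges, that any compact set in $(X,d_E)$ is bounded and hence contained in some $X^n$, and that compactness is preserved under the homeomorphism of Theorem~\ref{lemma-topo}. Your write-up spells out the boundedness step in slightly more detail, but the argument is the same.
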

\begin{cor}\label{cor2.3}
	Let $n\in \mathbb{N}$. Then $X^n$ is compact in $(X,d)$, and $\text{\rm int}(X^n)$ is open in $(X,d)$.
\end{cor}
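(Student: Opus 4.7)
The plan is to deduce both assertions directly from Theorem \ref{lemma-topo}, which says that $\Id_X: (X, d_E)\to (X, d)$ is a homeomorphism, together with the observation (already recorded just above the statement) that $X^n$ is compact in $(X, d_E)$ as a finite union of isometric copies of the unit interval.

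For the first assertion, since compactness is preserved by homeomorphisms, the compactness of $X^n$ in $(X, d_E)$ transfers through $\Id_X$ to yield compactness of $X^n$ in $(X, d)$. For the second assertion, I would first verify that $\text{\rm int}(X^n)$ is open in $(X, d_E)$. Given any $x \in \text{\rm int}(X^n)$, set $\varepsilon = (n - |x|)/2 > 0$; then for every $y$ with $d_E(x,y) < \varepsilon$ one has $\bigl||y| - |x|\bigr| \leq d_E(x,y) < \varepsilon$, and therefore $|y| < n$, so $y \in \text{\rm int}(X^n)$. This shows $B_{d_E}(x, \varepsilon) \subset \text{\rm int}(X^n)$, hence $\text{\rm int}(X^n)$ is open in $(X, d_E)$. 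Applying the homeomorphism of Theorem \ref{lemma-topo} one more time yields that $\text{\rm int}(X^n)$ is open in $(X, d)$.

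I do not expect any real obstacle: the content of the corollary is entirely a transfer of topological properties through the homeomorphism established in Theorem \ref{lemma-topo}, and the only small computation needed is the elementary Lipschitz-type inequality $||y|-|x|| \le d_E(x,y)$, which is immediate from the fact that $|x|$ is the $d_E$-distance from the root to $x$ and the triangle inequality for $d_E$.
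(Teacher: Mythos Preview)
Your proposal is correct and follows essentially the same route as the paper: the paper states Corollary~\ref{cor2.3} immediately after noting that $X^n$ is compact in $(X,d_E)$ and that compactness (and, implicitly, openness) transfers through the homeomorphism of Theorem~\ref{lemma-topo}. Your argument just fills in the elementary verification that $\mathrm{int}(X^n)$ is open in $(X,d_E)$, which the paper leaves implicit.
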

\begin{cor}\label{cor2.4}
	$(X, d, \mu)$ is a connected, locally compact, and non-compact metric measure space.
\end{cor}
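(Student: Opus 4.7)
The plan is to deduce each of the three properties — connectedness, local compactness, and non-compactness — from the homeomorphism provided by Theorem \ref{lemma-topo} together with the corresponding obvious property of the graph-metric space $(X, d_E)$, and then check briefly that $\mu$ is indeed a Borel regular measure making $(X, d, \mu)$ a metric measure space.

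First, I would record that $(X, d_E)$ is path-connected: since $X = V \cup E$ is a tree with each edge an isometric copy of $[0,1]$, any two points are joined by the unique geodesic $[x,y]$, which is a continuous path in $(X, d_E)$. Connectedness is preserved under continuous maps, so the homeomorphism $\Id_X: (X, d_E) \to (X, d)$ from Theorem \ref{lemma-topo} transfers this property to $(X, d)$.

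Next, for local compactness, I would use Corollary \ref{cor2.3} directly: given any $x \in X$, pick $n \in \mathbb{N}$ with $|x| < n$; then $\text{\rm int}(X^n)$ is an open neighborhood of $x$ in $(X, d)$ whose closure lies in $X^n$, which is compact in $(X,d)$. Hence each point has a compact neighborhood in $(X, d)$.

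For non-compactness, I would argue by contradiction. If $(X, d)$ were compact, then Corollary \ref{cor2.2} applied with $O = X$ would give $X \subset X^n$ for some $n \in \mathbb{N}$. But since $K \geq 1$, each vertex has at least one child, and iterating produces vertices $x$ with $|x| = n+1$, contradicting $X \subset X^n$.

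Finally, to confirm the metric measure space structure, I would note that $d$ is a metric by the construction in Section \ref{sec21} (the geodesic integral is symmetric, positive definite since $\lambda > 0$, and satisfies the triangle inequality along geodesics in the tree), and that $\mu$, defined by $d\mu(x) = \mu(|x|)\, d|x|$ with $\mu \in L^1_{\rm loc}([0,\infty))$ and $\mu > 0$, is a Borel regular measure on $X$ that is finite on each $X^n$ and therefore locally finite by Corollary \ref{cor2.3}. There is no real obstacle here; the corollary is a straightforward consequence of the homeomorphism theorem, so the only thing to be careful about is invoking the right previously established statement for each of the three claims and ensuring the argument for non-compactness uses $K \geq 1$ to exhibit arbitrarily distant vertices.
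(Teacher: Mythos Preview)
Your proposal is correct and follows exactly the approach implicit in the paper, which states this corollary without proof as an immediate consequence of the homeomorphism in Theorem~\ref{lemma-topo} and Corollaries~\ref{cor2.2}--\ref{cor2.3}. You have simply spelled out the details the paper leaves to the reader.
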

		\subsection{Newtonian spaces}\label{sec22}
\ \ \ \		Let $1<p<\infty$ and $X$ be a $K$-regular tree with metric $d$ and measure $\mu$ as in Section \ref{sec21}. Let $u\in L^{1}_{\text{\rm loc}}(X)$. We say that a Borel function $g:X\to[0,\infty]$ is an \textit{upper gradient} of $u$ if 
		\begin{equation}\label{2.1}|u(y)-u(z)|\leq \int_\gamma g ds
		\end{equation}
		whenever $y, z\in X$ and $\gamma$ is the geodesic from $y$ to $z$.  In the setting of our tree, any rectifiable curve with end points $z$ and $y$ contains the geodesic connecting $z$ and $y$, and therefore the upper gradient defined above is equivalent to the definition which requires that \eqref{2.1} holds for all rectifiable curves with end points $z$ and $y$. 
		In \cite{H03,HKST15}, the notion of a $p$-weak upper gradient is given. A Borel function $g: X\rightarrow [0, \infty]$ is called a $p$-weak upper gradient of $u$ if \eqref{2.1} holds on $p$-a.e. curve. Here we say that a property holds for {\it $p$-a.e. curve} if it fails only for a  curve family $\Gamma$ with {\it zero $p$-modulus}, i.e., there is a Borel nonnegative function $\rho\in L^p(X)$ such that $\int_{\gamma}\rho\, ds=\infty$ for any curve $\gamma\in \Gamma$.
		We refer to \cite{H03,HKST15} for more information about $p$-weak upper gradients.
		
		The notion of upper gradients is due to Heinonen and Koskela \cite{HK98}, we refer interested readers to \cite{BB11,H03,HKST15,N00} for a more detailed discussion on upper gradients.

		The following lemma of Fuglede shows that a converging sequence in $L^p$ has a subsequence that converges with respect to $p$-a.e curve (see \cite[Section 5.2]{HKST15}).
		\begin{lem}[Fuglede's lemma] \label{lemma21}
			Let $\{g_n\}_{n=1}^{\infty}$ be a sequence of Borel nonnegative functions that converges to $g$ in $L^p(X)$. Then there is a subsequence $\{g_{n_k}\}_{k=1}^{\infty}$ such that 
			\[\lim_{k\to\infty}\int_{\gamma}|g_{n_k}-g|ds=0
			\]for $p$-a.e curve $\gamma$ in $X$.
		\end{lem}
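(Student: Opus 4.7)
The plan is to follow the classical proof of Fuglede's lemma, combined with the characterization of zero $p$-modulus recorded in the paper: it suffices to exhibit a single Borel function $\rho \in L^p(X)$ that integrates to $+\infty$ along every exceptional curve.

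First I would extract a rapidly converging subsequence. Since $g_n \to g$ in $L^p(X)$, I can pass to $\{g_{n_k}\}$ with $\|g_{n_k}-g\|_{L^p(X)} \leq 4^{-k}$ for each $k$. Set $h_k := |g_{n_k}-g|$, which is Borel, nonnegative, and satisfies $\|h_k\|_{L^p(X)} \leq 4^{-k}$.

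Next, I would build the witness function
\[\rho := \sum_{k=1}^\infty 2^k h_k.\]
Applying Minkowski's inequality to the partial sums and then passing to the limit (monotone convergence for the increasing pointwise limit) gives $\|\rho\|_{L^p(X)} \leq \sum_{k=1}^\infty 2^k \cdot 4^{-k} = 1$, so $\rho$ is a Borel nonnegative function in $L^p(X)$. Now let $\Gamma$ be the family of rectifiable curves $\gamma$ in $X$ for which $\int_\gamma h_k\,ds \not\to 0$. For any $\gamma \in \Gamma$ there is $\varepsilon > 0$ and an infinite set of indices $k$ with $\int_\gamma h_k\,ds \geq \varepsilon$. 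Tonelli (or monotone convergence) justifies
\[\int_\gamma \rho\,ds = \sum_{k=1}^\infty 2^k \int_\gamma h_k\,ds,\]
and the infinitely many terms of size at least $2^k\varepsilon$ force this sum to be $+\infty$. Thus $\rho$ witnesses that $\Gamma$ has zero $p$-modulus in the sense defined earlier, so $\int_\gamma |g_{n_k}-g|\,ds \to 0$ along $p$-a.e. curve $\gamma$ in $X$.

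I do not anticipate any substantive obstacle; the only genuine idea is to balance the growing weights $2^k$ against the norm decay $4^{-k}$ so that $\rho$ remains in $L^p(X)$ while still blowing up along any curve on which $\int_\gamma h_k\,ds$ fails to vanish. The argument is in fact independent of the tree structure of $X$ and uses only that $(X,d,\mu)$ is a metric measure space with the given definition of $p$-modulus.
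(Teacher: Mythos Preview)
Your argument is correct; it is the standard proof of Fuglede's lemma. The paper does not give its own proof of this statement but simply cites \cite[Section 5.2]{HKST15}, where essentially the same argument appears.
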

		The following useful results are from \cite[Section 2.3 and Section  2.4]{HKST15} or \cite[Section 6.1]{BB11}.
		\begin{thm}\label{theorem22}
			Every bounded sequence $\{u_n\}_{n=1}^{\infty}$ in a reflexive normed space $(V, |.|_V)$ has a weakly convergent subsequence $\{u_{n_k}\}_{k=1}^\infty$. Moreover, there exists $u\in  V$ such  that $u_{n_k}\to u$ weakly in $V$ as $k\to\infty$ and 
			\[|u|_V\leq \liminf_{k\to \infty}|u_{n_k}|_V.
			\]
		\end{thm}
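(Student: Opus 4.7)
The plan is to prove this in two stages: first extract a weakly convergent subsequence, then establish the lower semicontinuity of the norm along that subsequence. Both parts are classical and rely on the interplay between reflexivity and the Hahn--Banach theorem.

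For the first stage, I would reduce to the separable case. Let $W$ denote the closed linear span of $\{u_n\}_{n=1}^\infty$ in $V$. Then $W$ is a separable closed subspace of a reflexive space, and a standard fact (a closed subspace of a reflexive space is reflexive) shows that $W$ is itself reflexive. Since $W$ is separable and reflexive, its dual $W^*$ is separable as well. Let $\{\varphi_k\}_{k=1}^\infty$ be a countable dense subset of the closed unit ball of $W^*$. Because the sequence $\{u_n\}$ is norm-bounded in $W$, each numerical sequence $\{\varphi_k(u_n)\}_{n=1}^\infty$ is bounded in $\mathbb{R}$ (or $\mathbb{C}$). A standard Cantor diagonal argument then yields a subsequence $\{u_{n_j}\}$ along which $\varphi_k(u_{n_j})$ converges for every $k$. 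Density of $\{\varphi_k\}$ in the unit ball of $W^*$ together with the uniform bound on $\|u_{n_j}\|$ upgrades this to convergence of $\varphi(u_{n_j})$ for every $\varphi\in W^*$. Thus $\Lambda(\varphi):=\lim_j \varphi(u_{n_j})$ defines a bounded linear functional on $W^*$, and reflexivity of $W$ provides some $u\in W\subset V$ with $\varphi(u)=\Lambda(\varphi)$ for all $\varphi\in W^*$. Extending an arbitrary functional in $V^*$ by restriction to $W$ shows $u_{n_j}\rightharpoonup u$ weakly in $V$.

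For the second stage, I would use Hahn--Banach. If $u=0$ the inequality $|u|_V\le \liminf_j |u_{n_j}|_V$ is trivial. Otherwise there exists $\varphi\in V^*$ with $|\varphi|_{V^*}=1$ and $\varphi(u)=|u|_V$. Weak convergence gives $\varphi(u_{n_j})\to \varphi(u)=|u|_V$, while $|\varphi(u_{n_j})|\le |u_{n_j}|_V$ for every $j$. Taking $\liminf$ yields
\[
|u|_V=\lim_{j\to\infty}\varphi(u_{n_j})=\liminf_{j\to\infty}\varphi(u_{n_j})\le \liminf_{j\to\infty}|u_{n_j}|_V,
\]
which is the desired inequality.

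The main obstacle is the first stage, specifically the passage from pointwise convergence on the countable dense set $\{\varphi_k\}$ to convergence on all of $W^*$ and the identification of the resulting functional on $W^*$ with evaluation at some element $u\in W$. This is where reflexivity (and the equivalent statement that the closed unit ball of $W$ is weakly sequentially compact, i.e.\ the Eberlein--\v{S}mulian theorem in the separable reflexive setting) is indispensable. Everything else, including the lower semicontinuity of the norm and the elementary separability arguments, is routine once this step is in place.
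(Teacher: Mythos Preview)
Your argument is correct and is essentially the standard proof of this classical fact: reduce to the separable closed span, use that a closed subspace of a reflexive space is reflexive so that $W^{**}=W$ and $W^*$ is separable, run a diagonal argument against a dense sequence in the unit ball of $W^*$, identify the limiting functional with an element of $W$ by reflexivity, and then obtain lower semicontinuity of the norm from a Hahn--Banach supporting functional. The only point worth tightening is the passage from weak convergence in $W$ to weak convergence in $V$: every $\psi\in V^*$ restricts to an element of $W^*$, so $\psi(u_{n_j})\to\psi(u)$ follows immediately, which you indicate but could state more explicitly.

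That said, the paper does not give its own proof of this theorem. It is quoted as background (together with Mazur's lemma) with a reference to \cite[Section~2.3 and Section~2.4]{HKST15} and \cite[Section~6.1]{BB11}, and is used only as a tool in the proofs of Lemmas~\ref{lem4} and~\ref{lem6}. So there is no proof in the paper to compare against; your proposal simply supplies what the paper leaves to the literature.
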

		\begin{lem}[{Mazur's lemma}] \label{lemma23}
			Let $\{u_n\}_{n=1}^\infty$ be a sequence in a normed space $V$ converging weakly to an element $u\in V$. Then there exists a sequence $\bar v_k$ of convex combinations
			\[\bar v_k=\sum_{i=k}^{N_k}\lambda_{i,k}u_i \ , \sum_{i=k}^{N_k}\lambda_{i,k}=1\ , \lambda_{i,k}\geq 0
			\]
			converging to $v$ in the norm.			
			\end{lem}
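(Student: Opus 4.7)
The plan is to apply the geometric (separation) form of the Hahn--Banach theorem to the convex hulls of the tails of $\{u_n\}$. For each integer $k\geq 1$, I set
\[ C_k:=\mathrm{conv}\{u_i:i\geq k\}, \]
the convex hull of all terms of index at least $k$. The desired convex combinations $\bar v_k$ will simply be chosen as elements of $C_k$ lying close to $u$ in norm, so the whole task reduces to proving that $u$ belongs to the norm closure $\overline{C_k}$ for every $k$.

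To show $u\in\overline{C_k}$, I argue by contradiction. If $u\notin \overline{C_k}$ for some $k$, then since $\overline{C_k}$ is a closed convex subset of $V$ and $\{u\}$ is a (trivially) compact convex set disjoint from it, the Hahn--Banach separation theorem furnishes a continuous linear functional $\ell\in V^*$ and a real number $\alpha$ with
\[ \ell(v)\leq \alpha<\ell(u)\quad\text{for all }v\in\overline{C_k}. \]
In particular $\ell(u_i)\leq \alpha$ for every $i\geq k$. But weak convergence $u_n\rightharpoonup u$ means $\ell(u_n)\to \ell(u)$ as $n\to\infty$, and passing to the limit yields $\ell(u)\leq \alpha$, contradicting the strict inequality $\ell(u)>\alpha$. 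Hence $u\in\overline{C_k}$ for every $k\in\mathbb{N}$.

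With this in hand, for each $k$ I pick $\bar v_k\in C_k$ with $\|\bar v_k - u\|_V<1/k$. By the definition of the convex hull, each such $\bar v_k$ can be written in the form $\bar v_k=\sum_{i=k}^{N_k}\lambda_{i,k}u_i$ with $\lambda_{i,k}\geq 0$ and $\sum_{i=k}^{N_k}\lambda_{i,k}=1$, which is exactly the form required by the statement, and $\bar v_k\to u$ in norm as $k\to\infty$. The main obstacle, and the only genuinely non-trivial ingredient, is the separation step: one must invoke the version of Hahn--Banach that \emph{strictly} separates a point from a closed convex set in a normed space, which is why passing to the closure $\overline{C_k}$ before applying the theorem is essential. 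Everything else is routine bookkeeping with convex combinations.
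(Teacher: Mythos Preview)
Your proof is correct and is the standard argument for Mazur's lemma via Hahn--Banach separation. Note, however, that the paper does not actually supply a proof of this lemma: it is stated as a known result with a reference to \cite[Section~2.3 and Section~2.4]{HKST15} and \cite[Section~6.1]{BB11}, so there is no ``paper's own proof'' to compare against. Your argument is precisely the one found in those references.
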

		The {\it Newtonian space} $N^{1, p}(X)$, $1< p<\infty$, is defined as the collection of all the functions $u$ with finite $N^{1, p}$-norm 
		$$\|u\|_{N^{1, p}(X)}:= \|u\|_{L^p(X)}+\inf_g \|g\|_{L^p(X)}$$
		where the infimum is taken over all upper gradients of $u$. We denote by $g_u$ the minimal upper gradient, which is unique up to measure zero and which is minimal in the sense that if $g\in L^p(X)$ is any upper gradient of $u$ then $g_u\leq g$ a.e.. We refer to \cite[Theorem 7.16]{H03} for proofs of the existence and uniqueness of such a minimal upper gradient. 
		
		If $u\in N^{1, p}(X)$, then it is continuous by \eqref{2.1} under the assumption $\frac{\lambda^p}{\mu}\in L^{\frac{1}{p-1}}_{\rm loc}([0,\infty))$ and  it has a minimal $p$-weak upper gradient, see \cite[Section 2]{PKW}. More precisely, by \cite[Proposition 2.2]{PKW} the empty family is
		the only curve family with zero $p$-modulus, and hence any $p$-weak upper gradient is actually an upper gradient here and the conclusion of Lemma \ref{lemma21} holds for every curve $\gamma$. Moreover, it follows from \cite[Definition 7.2 and Lemma 7.6]{H03} that any function $u\in L^1_{\rm loc}(X)$ with an upper gradient $0\leq g\in L^p(X)$ is locally absolutely continuous, for example, absolutely continuous on each edge. The “classical” derivative $u'$ of this locally absolutely continuous function is a minimal upper gradient in the sense that $g_u = |u'(x)|/\lambda(x)$ when $u$ is parametrized in the natural way.

		We define the {\it homogeneous Newtonian space} $\dot{N}^{1, p}(X)$, $1< p<\infty$, the collection of all the continuous functions $u$ that have an
		upper gradient $0\leq g\in L^p(X)$, for which the homogeneous $\dot{N}^{1, p}$-norm of $u$ defined as
		$$\|u\|_{\dot N^{1, p}(X)}:= |u(0)|+\inf_g  \|g\|_{L^p(X)}$$
		is finite. Here $0$ is the root of our $K$-regular tree $X$ and the infimum is taken over all upper gradients of $u$.
		
		 The  completion of the family of functions with compact support in $N^{1,p}(X)$ (or  $\dot N^{1,p}(X)$) is denoted by $N^{1,p}_0(X)$ (or $\dot N^{1,p}_0(X)$). We denote by $N^{1,p}_{\rm loc}(X)$ the space of all functions $u\in L^p_{\rm loc}(X)$ that have an upper gradient in $L^p_{\rm loc}(X)$, where $L^p_{\rm loc}(X)$ is the space of all measurable functions that are $p$-integrable on any compact subset of $X$. Especially, since each $X^n$ is compact in $(X,d)$ by Corollary \ref{cor2.3}, we conclude that each $u\in N^{1,p}_{\rm loc}(X)$ is both continuous and bounded on each $X^n$.
		\subsection{$p$-(super)harmonic functions}\label{sec2.3}
\ \ \ \			Let $1<p<\infty$ and  $X$ be a $K$-regular tree with metric $d$ and measure $\mu$ as  in Section \ref{sec21}. For an open subset $\Omega$ of  $ X$,  a function $u\in N^{1,p}_{\rm loc}(\Omega)$ is said to be a \textit{$p$-harmonic function} on $\Omega$ if \begin{equation}
			\label{1.sec23}\int_{\text{\rm spt}(\varphi)}g_u^pd\mu\leq \int_{\text{\rm spt}(\varphi)}g_{u+\varphi}^pd\mu 
			\end{equation}holds for all functions $\varphi\in N^{1,p}(\Omega)$ with compact support $\text{\rm spt}(\varphi)\subset \Omega$. We say that a function $u\in N^{1,p}_{\rm loc}(\Omega)$ is a \textit{$p$-superharmonic function} if \eqref{1.sec23} holds for all nonnegative functions $\varphi\in N^{1,p}(\Omega)$ with compact support $\text{\rm spt}(\varphi)\subset \Omega$.
		
We  have a characterization of $p$-harmonic functions on $X$, see \cite[Lemma 7.11]{BB11}.
			\begin{thm}\label{theorem24}
			A function $u$ is a $p$-harmonic function  on $X$ if and only if $u$ is  $p$-harmonic  on $\text{\rm int}(X^n)$ for all $n\in \mathbb{N}$.
		\end{thm}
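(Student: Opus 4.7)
The plan is to prove this localization result by transporting test functions between $X$ and the exhausting open sets $\text{\rm int}(X^n)$, via trivial extension (for the forward direction) and restriction (for the backward direction), invoking the standard locality of minimal upper gradients.

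For the forward direction, assume $u$ is $p$-harmonic on $X$. Fix $n\in\mathbb{N}$ and let $\varphi\in N^{1,p}(\text{\rm int}(X^n))$ with $\text{\rm spt}(\varphi)\subset \text{\rm int}(X^n)$ compact. Since $|\cdot|$ is continuous and $\text{\rm spt}(\varphi)$ is compact in $\text{\rm int}(X^n)=\{|x|<n\}$, we have $\max_{x\in\text{\rm spt}(\varphi)}|x|<n$, so $\text{\rm spt}(\varphi)\subset X^{n'}$ for some $n'<n$. Extend $\varphi$ by zero to obtain $\tilde\varphi$ on $X$. Because $\tilde\varphi$ vanishes on an open neighborhood of $X\setminus\text{\rm int}(X^n)$, it lies in $N^{1,p}(X)$ with compact support $\text{\rm spt}(\tilde\varphi)=\text{\rm spt}(\varphi)$, and its minimal upper gradient agrees with $g_\varphi$ on $\text{\rm int}(X^n)$ and is $0$ elsewhere. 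Since $u+\tilde\varphi=u+\varphi$ on $\text{\rm int}(X^n)$, the locality of the minimal upper gradient gives $g_{u+\tilde\varphi}=g_{u+\varphi}$ a.e.\ on $\text{\rm int}(X^n)$, and in particular a.e.\ on $\text{\rm spt}(\varphi)$. Applying the $p$-harmonicity of $u$ on $X$ with the test function $\tilde\varphi$ then yields
\[
\int_{\text{\rm spt}(\varphi)}g_u^p\,d\mu\leq \int_{\text{\rm spt}(\varphi)}g_{u+\tilde\varphi}^p\,d\mu=\int_{\text{\rm spt}(\varphi)}g_{u+\varphi}^p\,d\mu,
\]
as desired.

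For the backward direction, assume $u$ is $p$-harmonic on $\text{\rm int}(X^n)$ for every $n\in\mathbb{N}$. First note that since $X=\bigcup_n\text{\rm int}(X^n)$, the local regularity $u\in N^{1,p}_{\text{\rm loc}}(\text{\rm int}(X^n))$ for all $n$ upgrades to $u\in N^{1,p}_{\text{\rm loc}}(X)$. Now take any $\varphi\in N^{1,p}(X)$ with $\text{\rm spt}(\varphi)\subset X$ compact. By Corollary \ref{cor2.2} there exists $m\in\mathbb{N}$ with $\text{\rm spt}(\varphi)\subset X^m$, and since $X^m\subset\text{\rm int}(X^{m+1})$, the restriction of $\varphi$ to $\text{\rm int}(X^{m+1})$ is an admissible test function there. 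Invoking the $p$-harmonicity of $u$ on $\text{\rm int}(X^{m+1})$ and again the locality of minimal upper gradients (so that restriction does not change $g_u$ and $g_{u+\varphi}$ on $\text{\rm spt}(\varphi)$) gives the required inequality on $\text{\rm spt}(\varphi)$.

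The only mildly subtle point is the identification of minimal upper gradients under zero extension and under restriction, which is the usual locality principle: two functions that coincide on an open set have minimal upper gradients coinciding a.e.\ on that open set. Everything else is bookkeeping with the exhaustion $X=\bigcup_n\text{\rm int}(X^n)$ together with Corollary \ref{cor2.2}, which guarantees that any compactly supported test function on $X$ lives inside some $\text{\rm int}(X^{m+1})$.
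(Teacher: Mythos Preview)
Your argument is correct. Note, however, that the paper does not supply its own proof of this statement: it simply cites \cite[Lemma~7.11]{BB11}, which records exactly this localization principle for (super)minimizers on metric measure spaces. What you have written is essentially a self-contained unpacking of that lemma in the present setting, using the exhaustion $X=\bigcup_n\text{\rm int}(X^n)$ together with Corollary~\ref{cor2.2} to pass compactly supported test functions back and forth, and invoking the standard locality of minimal upper gradients to match $g_{u+\tilde\varphi}$ with $g_{u+\varphi}$ on $\text{\rm spt}(\varphi)$. So there is no genuine methodological difference to discuss; your write-up simply supplies the details the paper delegates to the reference.
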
		
		By  the stability properties of $p$-superharmonic functions (superminimizers) in general metric measure  spaces (see for instance \cite[Theorem 7.25]{BB11}),  since $\text{\rm int}(X^n)$ is open for each $n\in \mathbb{N}$ (see Corollary \ref{cor2.3}), we obtain the following results in our setting.
	
		\begin{thm}\label{theorem25}
			 Let $n\in\mathbb N$. If $\{u_i\}_{i\geq n}$ is a sequence of $p$-harmonic functions on $\text{\rm int}(X^n)$ which converges locally uniformly to $u$ in $\text{\rm int}(X^n)$, then $u$ is $p$-harmonic on $\text{\rm int}(X^n)$.
		\end{thm}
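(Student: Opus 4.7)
The plan is to invoke the stability theorem for $p$-superharmonic functions in metric measure spaces, namely \cite[Theorem 7.25]{BB11}, which the text explicitly points to. The bridge between that theorem (which concerns superminimizers) and the statement here (which concerns $p$-harmonic functions, i.e., minimizers) is the classical equivalence: a function $u \in N^{1,p}_{\rm loc}(\Omega)$ is $p$-harmonic on an open set $\Omega$ if and only if both $u$ and $-u$ are $p$-superharmonic on $\Omega$. The forward direction is immediate by restricting \eqref{1.sec23} to nonnegative test functions, and then applying the same restriction to $-u$ with test function $-\varphi$ (using $g_{-u+(-\varphi)} = g_{u+\varphi}$); the converse is a standard consequence of the convexity of the $p$-energy, which allows any signed admissible perturbation to be reconstructed from its positive and negative parts.

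Granting this equivalence, I would verify that the hypotheses of \cite[Theorem 7.25]{BB11} are met in our tree. By Corollary \ref{cor2.3}, $\text{\rm int}(X^n)$ is open in $(X,d)$; by Corollary \ref{cor2.4}, $(X,d,\mu)$ is a connected, locally compact metric measure space; and by \cite[Proposition 2.2]{PKW}, the only curve family of zero $p$-modulus is empty, so upper gradients and $p$-weak upper gradients coincide on $X$, and the Newtonian setup in Section \ref{sec22} matches the framework used in \cite{BB11}. Applying \cite[Theorem 7.25]{BB11} to the locally uniformly convergent sequences of $p$-superharmonic functions $\{u_i\}$ and $\{-u_i\}$ then shows that both $u$ and $-u$ are $p$-superharmonic on $\text{\rm int}(X^n)$; by the equivalence above, $u$ is $p$-harmonic, completing the proof.

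The main delicate point I anticipate is the equivalence between $p$-harmonicity and the simultaneous $p$-superharmonicity of $u$ and $-u$; once that is in place, the remainder reduces to a routine verification that our tree satisfies the hypotheses of the cited metric-space stability theorem, which has already been arranged by the corollaries and results collected in Section \ref{Sec2}.
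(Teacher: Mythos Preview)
Your proposal is correct and follows essentially the same route as the paper. The paper does not give a detailed proof of this theorem; it simply records it as a direct consequence of \cite[Theorem~7.25]{BB11}, noting only that $\text{\rm int}(X^n)$ is open by Corollary~\ref{cor2.3}. Your write-up is in fact more explicit than the paper's, since you spell out the bridge between the superminimizer stability statement and the minimizer conclusion via the equivalence ``$u$ is $p$-harmonic $\Leftrightarrow$ both $u$ and $-u$ are $p$-superharmonic,'' and you verify the ambient hypotheses using Corollaries~\ref{cor2.3}--\ref{cor2.4} and \cite[Proposition~2.2]{PKW}.
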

		Let $1<p<\infty$ and $n\in\mathbb N$. Then  $(\text{\rm int}(X^n), d, \mu)$ is said to be \textit{doubling} and to support a \textit{$p$-Poincar\'e inequality}  if  there exist constants $C_1\geq 1, C_2>0$ only depending on $n$ such  that 		
		for all  balls  $B(x,2r)\subset \text{\rm int}(X^n)$, 
		\[\mu(B(x,2r))\leq C_1\mu(B(x,r))\] 
		and for all balls $B(x, r)\subset \text{\rm int}(X^n)$, \[\dashint_{B(x,r)}|u-u_{B(x,r)}|d\mu\leq C_2r\left (\dashint_{B(x, r)}g^pd\mu\right )^{\frac{1}{p}}\]whenever $u$ is a measurable function on $B(x, r)$ and $g$ is an upper gradient of $u$.
		Recall that if $(\text{\rm int}(X^n), d, \mu)$ is doubling and supports a $p$-Poincar\'e inequality then $N^{1,p}(\text{\rm int}(X^n), d, \mu)$ is a reflexive space  (see \cite[Theorem 4.48]{C99}). 
	
	Combining Proposition 3.9 and Theorem 5.4 in \cite{KS01}, we obtain the local H\"older continuity of $p$-harmonic functions on $\text{\rm int}(X^n)$ for each $n\in\mathbb{N}$.
		\begin{thm}\label{theorem26}
			Let $n\in \mathbb{N}$. Assume that $(\text{\rm int}(X^n), d, \mu)$ is doubling and supports a $p$-Poincar\'e inequality. Then every $p$-harmonic function $u$ on $\text{\rm int}(X^n)$ is locally $\alpha$-H\"older continuous  for some $0<\alpha\leq 1$.
		\end{thm}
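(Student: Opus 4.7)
The plan is to reduce the claim to the standard De~Giorgi--Nash--Moser type regularity theory for $p$-minimizers on a doubling metric measure space that supports a $p$-Poincar\'e inequality, which is exactly the setting covered by \cite{KS01}.

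First I would fix $n\in \mathbb N$ and unpack the definition of a $p$-harmonic function on the open set $\text{\rm int}(X^n)$. Since \eqref{1.sec23} must hold for every test function $\vr \in N^{1,p}(\text{\rm int}(X^n))$ with compact support, the function $u$ is in particular a $p$-minimizer (and hence a $1$-quasiminimizer) of the $p$-energy in the sense used in \cite{KS01}: for every such $\vr$ one has
\[
\int_{\supp(\vr)} g_u^{\,p}\,d\mu \leq \int_{\supp(\vr)} g_{u+\vr}^{\,p}\,d\mu.
\]
This is the only structural information about $u$ that the regularity theory requires; no PDE, Euler--Lagrange equation, or smoothness is needed beyond membership in $N^{1,p}_{\rm loc}$.

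Next, I would verify that the ambient space $(\text{\rm int}(X^n), d, \mu)$ satisfies the standing hypotheses of \cite{KS01}. By assumption it is doubling and supports a $p$-Poincar\'e inequality, with constants depending only on $n$. Together with Corollary \ref{cor2.4} (connectedness, local compactness) these are precisely the hypotheses under which Proposition~3.9 of \cite{KS01} gives a Caccioppoli-type estimate that places $u$ and the functions $\pm u$ into the appropriate De Giorgi class $DG_p(\text{\rm int}(X^n))$. I would then invoke Theorem~5.4 of \cite{KS01}, which upgrades membership in the De Giorgi class (under doubling and a $p$-Poincar\'e inequality) to local H\"older continuity with some exponent $\alpha = \alpha(n,p,C_1,C_2) \in (0,1]$.

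The only mild obstacle is the verification that the version of the $p$-minimizer/quasiminimizer condition used in \cite{KS01} is literally our definition \eqref{1.sec23}. Since our test functions range over all $\vr \in N^{1,p}(\text{\rm int}(X^n))$ with compact support in $\text{\rm int}(X^n)$ and the Newtonian space $N^{1,p}(\text{\rm int}(X^n))$ is reflexive in our setting (as noted after the definition of a $p$-Poincar\'e inequality, by \cite[Theorem 4.48]{C99}), the class of admissible competitors matches the one in \cite{KS01}, so no further adjustment is needed and the two cited results combine to yield the claim.
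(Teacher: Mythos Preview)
Your approach is correct and matches the paper's: the paper does not give a proof at all but simply states that the result follows by combining Proposition~3.9 and Theorem~5.4 of \cite{KS01}, which is exactly the reduction you carry out. Your additional remarks (identifying $p$-harmonic functions as $1$-quasiminimizers, checking that the test-function class matches, and noting reflexivity via \cite{C99}) are correct elaborations of this citation.
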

	\section{Proofs of Theorem \ref{main theorem} and Theorem \ref{main theorem2}}  \label{Sec3}
\ \ \ \ \ In this section, if we do not specifically mention, we always assume that $1<p<\infty$ and that $X$ is a $K$-regular tree with metric $d$ and measure $\mu$ as in Section \ref{sec21}.
	\begin{lem} \label{lem1}
		$X$ is $p$-parabolic if and only if $\text{\rm Cap}_p(X^n)=0$ for all $n \in\mathbb{N}\cup \{0\}$.
		\end{lem}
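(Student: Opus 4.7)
The plan is to prove both implications directly from the definition of $p$-parabolicity together with Corollaries \ref{cor2.2} and \ref{cor2.3}, using only the monotonicity of $p$-capacity. There is no real obstacle here; the lemma is essentially a packaging result that isolates the role played by the level sets $X^n$.

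For the forward direction, I would argue as follows. Suppose $X$ is $p$-parabolic, and let $n \in \mathbb{N} \cup \{0\}$. By Corollary \ref{cor2.3}, $X^n$ is compact in $(X,d)$. The definition of $p$-parabolicity then immediately yields $\text{\rm Cap}_p(X^n) = 0$.

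For the reverse direction, I would invoke monotonicity of $p$-capacity. Suppose $\text{\rm Cap}_p(X^n) = 0$ for all $n \in \mathbb{N} \cup \{0\}$, and let $O \subset X$ be an arbitrary compact set. By Corollary \ref{cor2.2}, there exists $n \in \mathbb{N}$ with $O \subset X^n$. Now observe that any admissible function $u \in N^{1,p}_0(X)$ for $X^n$, namely one with $u|_{X^n} \equiv 1$, automatically satisfies $u|_O \equiv 1$ and is therefore admissible for $O$. Taking the infimum of $\int_X g_u^p \, d\mu$ over the (possibly larger) class of competitors for $O$ yields
\[
\text{\rm Cap}_p(O) \leq \text{\rm Cap}_p(X^n) = 0,
\]
so $\text{\rm Cap}_p(O) = 0$. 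Since $O$ was an arbitrary compact subset of $X$, this shows $X$ is $p$-parabolic, completing the proof.
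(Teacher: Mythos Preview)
Your proposal is correct and follows essentially the same route as the paper: compactness of $X^n$ from Corollary \ref{cor2.3} for the forward direction, and Corollary \ref{cor2.2} together with monotonicity of capacity for the converse. The only cosmetic difference is that Corollary \ref{cor2.3} is stated for $n\in\mathbb{N}$, so the paper handles $n=0$ via $\text{\rm Cap}_p(X^0)\leq \text{\rm Cap}_p(X^n)$; your appeal to Corollary \ref{cor2.3} for $n=0$ is a harmless overreach since $X^0=\{0\}$ is trivially compact.
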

	\begin{proof}
		 Let $X$ be $p$-parabolic. By Corollary \ref{cor2.3}, we have that $X^n$ is compact in $(X,d)$ for all $n\in\mathbb{N}$ and hence  \[\text{\rm Cap}_p(X^n)=0\] for all $n\in \mathbb{N}$. This also holds for all $n\in \mathbb{N}\cup \{0\}$ because $\text{\rm Cap}_p(X^0)\leq \text{\rm Cap}_p(X^n)$.
		Conversely, suppose that 
		\begin{equation}
		\label{cap-lem2}\text{\rm Cap}_p(X^n)=0
		\end{equation}
		 for all $n\in\mathbb{N}\cup\{0\}$. Let $O$ be an arbitrary compact set in $(X,d)$. Then $O\subset X^n$ for some $n\in \mathbb{N}$ by Corollary \ref{cor2.2}, and so that $\text{\rm Cap}_p(O)\leq \text{\rm Cap}_p(X^n)$. Combining this with \eqref{cap-lem2} yields  $\text{\rm Cap}_p(O)=0$. Since $O$ is arbitrary, we conclude that $X$ is $p$-parabolic. The proof is complete.
	\end{proof}
	\begin{lem}\label{prop}
		Let $n\in \mathbb{N}\cup\{0\}$ be arbitrary. Then $R_p=\infty$ if and only if $\text{\rm Cap}_p(X^n)=0$.
	\end{lem}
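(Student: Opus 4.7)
The plan is to compute $\text{\rm Cap}_p(X^n)$ in closed form and read off the equivalence. Setting $I(t_1,t_2):=\int_{t_1}^{t_2}\lambda(s)^{p/(p-1)}\mu(s)^{1/(1-p)}K^{j(s)/(1-p)}\,ds$, so that $R_p=I(0,\infty)$, I aim to prove
\[
\text{\rm Cap}_p(X^n)=I(n,\infty)^{-(p-1)}.
\]
Since the standing hypothesis $\lambda^p/\mu\in L^{1/(p-1)}_{\rm loc}([0,\infty))$ makes $I(0,n)<\infty$, $R_p=\infty$ is then equivalent to $I(n,\infty)=\infty$, which by the identity above is equivalent to $\text{\rm Cap}_p(X^n)=0$, yielding the lemma.

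The first step is a radial symmetrization. For a compactly supported test function $u\in N^{1,p}(X)$ with $u|_{X^n}\equiv 1$, define the level average
\[
\tilde u(t):=K^{-j(t)}\sum_{|x|=t} u(x),
\]
using that the cross-section $\{|x|=t\}$ consists of exactly $K^{j(t)}$ points. The tree structure gives the Fubini identity $\int_X f\,d\mu=\int_0^\infty\sum_{|x|=t}f(x)\,\mu(t)\,dt$, and Newtonian functions on $X$ have minimal upper gradient $g_u(x)=|u'(x)|/\lambda(|x|)$, where $u'$ denotes the edge derivative with respect to $|x|$ (Section~\ref{sec22}). Convexity of $|\cdot|^p$ combined with Jensen's inequality then yields the symmetrization estimate
\[
\int_0^\infty K^{j(t)}\,|\tilde u'(t)|^p\,\lambda(t)^{-p}\mu(t)\,dt \le \int_X g_u^p\,d\mu,
\]
while $\tilde u(n)=1$ and $\tilde u$ has compact support in $[0,\infty)$.

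The remaining one-dimensional inequality is handled by Hölder. Writing $1=\tilde u(n)=-\int_n^\infty\tilde u'(t)\,dt$ and factoring
\[
|\tilde u'(t)|=\bigl(|\tilde u'(t)|^p K^{j(t)}\lambda(t)^{-p}\mu(t)\bigr)^{1/p}\cdot\bigl(K^{j(t)/(1-p)}\lambda(t)^{p/(p-1)}\mu(t)^{1/(1-p)}\bigr)^{(p-1)/p},
\]
Hölder's inequality and the symmetrization estimate give $1\le \bigl(\int_X g_u^p\,d\mu\bigr)^{1/p}\,I(n,\infty)^{(p-1)/p}$, whence $\text{\rm Cap}_p(X^n)\ge I(n,\infty)^{-(p-1)}$ after infimizing over competitors. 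Sharpness is provided by the explicit competitor $v_M(x):=\phi_M(|x|)$ with $\phi_M\equiv 1$ on $[0,n]$, $\phi_M\equiv 0$ on $[M,\infty)$, and $\phi_M(t):=1-I(n,t)/I(n,M)$ on $[n,M]$; a direct computation (the equality case of the Hölder inequality) gives $\int_X g_{v_M}^p\,d\mu=I(n,M)^{-(p-1)}$, which tends to $0$ as $M\to\infty$ when $R_p=\infty$ and shows $\text{\rm Cap}_p(X^n)\le I(n,\infty)^{-(p-1)}$ in general.

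The main obstacle is the precise choice of weights in the Hölder pairing so that the conjugate integral is exactly $I(n,\infty)$; the combinatorial factor $K^{j(t)}$ coming from the branching of the tree, passed through Jensen, is what produces the $K^{j(t)/(1-p)}$ inside the integrand of $R_p$. A secondary, standard, step is the reduction of an arbitrary competitor $u\in N^{1,p}_0(X)$ with $u|_{X^n}\equiv 1$ to a compactly supported one, achieved by multiplying by a radial Lipschitz cutoff $\eta_k(|x|)$ equal to $1$ on $X^{n+k}$ and vanishing outside $X^{n+k+1}$ and letting $k\to\infty$, which works because Newtonian functions are continuous under $\lambda^p/\mu\in L^{1/(p-1)}_{\rm loc}$.
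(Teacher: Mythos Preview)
Your argument is correct and in fact establishes the sharper identity $\text{\rm Cap}_p(X^n)=I(n,\infty)^{-(p-1)}$, from which the lemma is immediate. The paper proceeds differently: the direction $R_p=\infty\Rightarrow\text{\rm Cap}_p(X^n)=0$ uses the same explicit radial competitor you call $v_M$, but for the converse the paper does not bound the capacity from below. Instead it observes that if $\text{\rm Cap}_p(X^n)=0$, then a minimizing sequence $u_k$ satisfies $\|u_k-1\|_{\dot N^{1,p}(X)}\to 0$, so the constant $1$ lies in $\dot N^{1,p}_0(X)$, and then invokes \cite[Theorem~1.3 and Corollary~4.2]{PKW} for the equivalence $1\in\dot N^{1,p}_0(X)\Leftrightarrow R_p=\infty$. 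Your route---radial symmetrization via Jensen, followed by the H\"older pairing whose conjugate weight is exactly $K^{j(t)/(1-p)}\lambda^{p/(p-1)}\mu^{1/(1-p)}$---is self-contained and yields the exact capacity value rather than just the dichotomy, at the cost of a little more computation; the paper's route is shorter but exports the substance of the converse to \cite{PKW}. One minor point: your cutoff reduction from $N^{1,p}_0(X)$ to compactly supported competitors is cleaner if you instead take the defining approximating sequence $u_k$ with compact support, note that $\tilde u_k(n)\to 1$ by the continuity of Newtonian functions here, and pass to the limit in the H\"older inequality.
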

\begin{proof}Suppose that  $R_p=\infty$.  We first claim that $\text{\rm  Cap}_p(X^n)=0$.  
	Let us define a sequence 
	$\{u_k\}_{k=n+1}^{\infty}$ by  setting
	\begin{equation}
	\label{uk} u_k(x)=\begin{cases}
	1 & \text{ if } x\in X^n,\\
	1-\frac{\int_n^{|x|}\lambda^{\frac{p}{p-1}}(t)\mu^{\frac{1}{1-p}}(t)K^{\frac{j(t)}{1-p}}dt}{\int_n^{k}\lambda^{\frac{p}{p-1}}(t)\mu^{\frac{1}{1-p}}(t)K^{\frac{j(t)}{1-p}}dt}&\text{ if }x\in X^k\setminus X^n,\\
	0&\text{ otherwise }.
	\end{cases}
	\end{equation}
	Then
	\[g_k(x)=\frac{\lambda^{\frac{1}{p-1}}(x)\mu^{\frac{1}{1-p}}(x)K^{\frac{j(x)}{1-p}}}{\int_n^k\lambda^{\frac{p}{p-1}}(t)\mu^{\frac{1}{1-p}}(t)K^{\frac{j(t)}{1-p}}dt}\chi_{X^k\setminus X^n}(x)
	\] is an upper gradient of $u_k$. 
	  Next, a direct computation reveals that
	\begin{equation}
	\label{guk}\int_Xg_{u_k}^pd\mu \leq \int_Xg_k^pd\mu = \frac{1}{\left (\int_n^{k}\lambda^{\frac{p}{p-1}}(t)\mu^{\frac{1}{1-p}}(t)K^{\frac{j(t)}{1-p}}dt\right )^{p-1}}<\infty
	\end{equation}
	for all $k\geq n+1$. Since $R_p=\infty$ and $\mu(X^k)<\infty$ for each $k\in \mathbb{N}$, it follows from \eqref{uk}-\eqref{guk} that $u_k\in N^{1,p}_0(X)$ with $u_k|_{X^n}\equiv  1$ and that
	\[\lim_{k\to\infty}\int_Xg_{u_k}^pd\mu=0.
	\]We thus get $\text{\rm Cap}_p(X^n)=0$.
	Conversely, suppose that  $\text{\rm Cap}_p(X^n)=0$. Then there exists a sequence $\{u_k\}_{k=1}^\infty$ in $N^{1,p}_0(X)$ with $u_k|_{X^n}\equiv 1$ such that 
	\begin{equation}
	\label{gu_k}\lim_{k\to \infty}\int_{X}g_{u_k}^pd\mu=0.
	\end{equation}
	Set $v_k:=u_k-1$. Then $g_{u_k}=g_{v_k}$.
	Combining this with \eqref{gu_k} and $u_k(0)=1$ yields
	\[\| u_k-1\|^p_{\dot N^{1,p}(X)}=\| v_k\|^p_{\dot N^{1,p}(X)}=\int_{X}g_{v_k}^pd\mu  \to 0 , \text{ as }k\to \infty.
	\]
	Therefore $u_k\to 1$ in $\dot N^{1,p}(X)$ with $u_k\in  N^{1,p}_0(X)$, and hence  $1\in \dot N_0^{1,p}(X)$. Recall that $R_p=\infty$ is equivalent to $1\in \dot N^{1,p}_0(X)$ by \cite[Theorem 1.3 and Corollary 4.2]{PKW}. Thus $R_p=\infty$ which completes the proof.
\end{proof}	
\begin{lem}\label{lem2} 
	Let $X$ be  $p$-parabolic. Then every nonnegative $p$-superharmonic function $u$  on $X$ is constant.
\end{lem}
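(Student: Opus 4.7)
The plan is to derive a contradiction from the assumption that $u$ is non-constant by showing that $g_u = 0$ $\mu$-a.e.\ on $X$, which together with the continuity of $u$ and the connectedness of $(X,d)$ will force $u$ to be constant. First, by replacing $u$ with $u - \inf_X u \geq 0$ (which leaves the upper gradient and the $p$-superharmonic inequality unchanged), I may assume $\inf_X u = 0$. The $p$-parabolicity of $X$ combined with Lemma \ref{lem1} yields $\text{\rm Cap}_p(X^0) = 0$, and the explicit construction in the proof of Lemma \ref{prop} with $n=0$ supplies a sequence $\{u_k\}_{k\geq 1} \subset N^{1,p}_0(X)$ with $\text{\rm spt}(u_k) \subset X^k$, $u_k \equiv 1$ on $X^0 = \{0\}$, $0 \leq u_k \leq 1$, $u_k$ non-decreasing in $k$ with $u_k(x)\uparrow 1$ for every $x\in X$ (the denominator grows without bound because $R_p = \infty$), and $\int_X g_{u_k}^p\,d\mu \to 0$.

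For a fixed constant $c>0$, the next step is to test the $p$-superharmonic inequality against
\[
\varphi_k := (cu_k - u)_+.
\]
This is nonnegative, belongs to $N^{1,p}(X)$, and has compact support inside $X^k$ (since $u\geq 0$ forces $\varphi_k = 0$ off $X^k$), so it is admissible. Note that $u+\varphi_k = \max(u,cu_k)$ and $\text{\rm spt}(\varphi_k)\subset \{u\leq cu_k\}$. The standard pointwise formula for the minimal upper gradient of a maximum (see, e.g., \cite[Chapter~2]{BB11}) gives $g_{\max(u,cu_k)} = c\,g_{u_k}$ $\mu$-a.e.\ on $\{u<cu_k\}$ and $g_{\max(u,cu_k)}\leq c\,g_{u_k}$ $\mu$-a.e.\ on $\{u=cu_k\}$, so $g_{u+\varphi_k}\leq c\,g_{u_k}$ $\mu$-a.e.\ on $\text{\rm spt}(\varphi_k)$. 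The $p$-superharmonic inequality then yields
\[
\int_{\{u<cu_k\}} g_u^p\,d\mu \leq \int_{\text{\rm spt}(\varphi_k)} g_u^p\,d\mu \leq \int_{\text{\rm spt}(\varphi_k)} g_{u+\varphi_k}^p\,d\mu \leq c^p\int_X g_{u_k}^p\,d\mu \xrightarrow[k\to\infty]{}0.
\]
Since $u_k \uparrow 1$ monotonically, the sets $\{u<cu_k\}$ increase with $k$ to $\{u<c\}$, so monotone convergence gives $\int_{\{u<c\}} g_u^p\,d\mu = 0$. As $c>0$ is arbitrary and $u$ is everywhere finite by continuity, letting $c\to\infty$ (equivalently using $\{u<\infty\}=X$) produces $g_u = 0$ $\mu$-a.e.\ on $X$.

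Using the identification $g_u = |u'|/\lambda$ on each edge recalled in Section \ref{sec22}, $u'$ then vanishes a.e.\ on every edge, and by absolute continuity $u$ is constant on each edge; continuity at the vertices together with the connectedness of $(X,d)$ from Corollary \ref{cor2.4} propagates this to a global constant, which (since $\inf_X u = 0$) means $u\equiv 0$, contradicting the assumed non-constancy. The main technical point is justifying the upper-gradient bound $g_{u+\varphi_k}\leq c\,g_{u_k}$ on $\text{\rm spt}(\varphi_k)$ for the test function $\varphi_k = (cu_k - u)_+$; once this is in place, the argument is a clean combination of the superharmonic inequality with the capacity cutoffs furnished by $p$-parabolicity.
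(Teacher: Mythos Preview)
Your proof is correct and follows essentially the same strategy as the paper's: both test the $p$-superharmonic inequality against $\varphi=\max(\text{capacity cutoff},u)-u$ and use $\text{Cap}_p(X^n)=0$ to force $g_u=0$ a.e. The only difference is cosmetic---the paper fixes $n_0$, takes $M=\|u\|_{L^\infty(X^{n_0})}$ and a cutoff $\equiv 1$ on $X^{n_0}$ (so the maximum is constant there, yielding $\int_{X^{n_0}}g_u^p\,d\mu\le M^p\int_X g_{1_n}^p\,d\mu\to 0$ directly), whereas you use the explicit monotone sequence $u_k\uparrow 1$ from Lemma~\ref{prop} together with a double limit $k\to\infty$, $c\to\infty$.
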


	\begin{proof}
		Let $u\in N^{1,p}_{\text{\rm loc}}(X)$ be an arbitrary  nonnegative $p$-superharmonic function on $X$. We claim that $u$ is  constant. Indeed,  let
 $n_0\in \mathbb{N}$ be arbitrary. We denote
\[M:=\| u\|_{L^\infty(X^{n_0})}.
\]
Then $M<\infty$, since  $u\in N^{1,p}_{\rm loc}(X)$ is bounded on $X^n$ for each $n\in \mathbb{N}$, see the  end of Section \ref{sec22}. By Lemma \ref{lem1}, we have $\text{\rm Cap}_p(X^{n_0})=0$, and hence that there is a sequence $\{1_n\}_{n=1}^{\infty}$ in $N^{1,p}_0(X)$ with $1_n|_{X^{n_0}}\equiv 1$ such that 
	\begin{equation}\label{0-thm1}\lim_{n\to \infty}\int_{X}g_{ 1_n}^pd\mu = 0 .
\end{equation}Without loss of generality we assume that each $\text{\rm spt}(1_n)$ is compact.  We define a sequence $\{\varphi_n\}_{n=1}^{\infty}$  by setting \[\varphi_n(x)=\max\{M \cdot 1_n(x), u(x) \}-u(x)\] for each $n\in \mathbb{N}$ and for all $x\in X$. Then 
\begin{equation}
\label{sptvarphi}\text{\rm spt}(\varphi_n)\subset \text{\rm spt}(1_n)
\end{equation}
for all $n\in \mathbb{N}$. We have that  $0\leq \varphi_n \in N^{1,p}(X)$ with  compact support $\text{\rm spt}(\varphi_n)$, because \eqref{sptvarphi} holds and $\text{\rm spt}(1_n)$ is compact. Since  $u$ is  $p$-superharmonic   on $X$, it follows that 
\begin{equation}
\label{test1}\int_{\text{\rm spt}(\varphi_n)}g_u^pd\mu\leq \int_{\text{\rm spt}(\varphi_n)}g_{u+\varphi_n}^pd\mu
\end{equation}
for all $n\in \mathbb{N}$. As $u+\varphi_n=\max\{M\cdot 1_n, u\}$, we have that
\begin{equation}
\label{guvarphin}g_{u+\varphi_n}^p(x)= g^p_{M\cdot 1_n}(x) \chi_{\{x\in X: M\cdot 1_n\geq u\}}(x) + g^p_u(x) \chi_{\{x\in X: u> M\cdot 1_n\}}(x)
\end{equation}
for all $x\in X$. According to  $M= \| u\|_{L^\infty(X^{n_0})}$, $1_n|_{X^{n_0}}\equiv 1$, it follows that $u(x)\leq M\cdot 1_n(x)$ and $M\cdot 1_n(x)\equiv M$ for all $x\in X^{n_0}$. Thanks to \eqref{sptvarphi}, we have that for all $x\in \text{\rm spt}(1_n)$,
\begin{equation}
\label{guvarphin1} \chi_{\{x\in X: u> M\cdot 1_n\}}(x) \leq \chi_{\{x\in \text{\rm spt}(1_n)\setminus X^{n_0}\}}(x) \text{ and }g_{M\cdot  1_n}= M\cdot   g_{1_n}\chi_{\{x\in \text{\rm spt}(1_n)\setminus  X^{n_0}\}}.
\end{equation} Substituting \eqref{guvarphin1} into \eqref{guvarphin} and combining with $\chi_{\{x\in X: M\cdot 1_n\geq u\}}\leq 1$ yields
\begin{align*}
g_{u+\varphi_n}^p(x)
&\leq M^p\cdot g^p_{1_n}(x) \chi_{\{x\in \text{\rm spt}(1_n)\setminus X^{n_0} \}}(x)+g^p_u(x)\chi_{\{x\in \text{\rm spt}(1_n)\setminus X^{n_0}\}}(x)
\end{align*}for all $x \in \text{\rm spt}(1_n)$. By \eqref{sptvarphi}, the above inequality holds for all $x\in \text{\rm spt}(\varphi_n)$. Then \eqref{test1} gives that
 \begin{equation}
 \label{gu}\int_{\text{\rm spt}(\varphi_n)}g_u^pd\mu \leq M^p\int_{\text{\rm spt}(\varphi_n)\setminus X^{n_0}}g_{1_n}^pd\mu + \int_{\text{\rm spt}(\varphi_n)\setminus X^{n_0}}g_u^pd\mu 
 \end{equation}
for all $n\in \mathbb{N}$. By $\int_{\text{\rm spt}(\varphi_n)\setminus X^{n_0}}g_u^pd\mu  <\infty$, because $u\in N^{1,p}_{\rm loc}(X)$ and $\text{\rm spt}(\varphi_n)$ is compact, subtracting $\int_{\text{\rm spt}(\varphi_n)\setminus X^{n_0}}g_u^pd\mu $ from both sides of \eqref{gu} yields
\begin{equation}\label{eq3.12}
\int_{X^{n_0}}g_u^pd\mu\leq M^p\int_{\text{\rm spt}(\varphi_n)\setminus X^{n_0}}g_{1_n}^pd\mu \leq M^p\int_{X}g_{1_n}^pd\mu 
\end{equation}
for all $n\in\mathbb{N}$. Letting $n\to \infty$, we  conclude from \eqref{0-thm1} and \eqref{eq3.12} that 
$\int_{X^{n_0}}g_u^pd\mu =0.
$ Since $n_0$ is arbitrary,  this implies that $u$ is constant,  and the claim  follows.
		\end{proof}	
	\begin{rem} \label{remark}Assume that $f>0$ is a $p$-superharmonic function on $X$. Then 
		\begin{equation}\label{caccioppoli}\int_Xf^{-p}g_f^p\varphi^pd\mu\leq \left (\frac{p}{p-1}\right )^p\int_Xg_\varphi^pd\mu 
		\end{equation}
		for all $\varphi\in N_0^{1,p}(X)$ with $0\leq \varphi\leq 1$. This inequality \eqref{caccioppoli} is often called a Caccioppoli-type inequality, see Section 3 in \cite{KM2003}.
	\end{rem}
One can give an alternate proof for Lemma \ref{lem2} via the Caccioppoli inequality \eqref{caccioppoli}. Indeed, let $u$ be an arbitrary nonnegative $p$-superharmonic function on $X$. Suppose that $X$ is $p$-parabolic. By Lemma \ref{lem1}, we have that  $\text{\rm Cap}_p(X^n)=0$ for all $n\in\mathbb{N}$. Let $n\in\mathbb{N}$ be arbitrary. Then for any $\varepsilon>0$  there exists $u_{n,\varepsilon}\in N^{1,p}_0(X)$ with $0\leq u_{n,\varepsilon}\leq 1$ and $u_{n,\varepsilon}|_{X^n}\equiv 1$ such that 
\begin{equation}
\label{capun}\int_{X}g_{u_{n,\varepsilon}}^pd\mu\leq \text{\rm Cap}_p(X^n)+\varepsilon=\varepsilon.
\end{equation}
Applying the Caccioppoli inequality \ref{caccioppoli} for $f=u+1$ with $\varphi=u_{n,\varepsilon}$, yields
\begin{equation}
\label{cacci}\int_{X}(u+1)^{-p}g_{u+1}^pu_{n,\varepsilon}^pd\mu\leq \left (\frac{p}{p-1}\right )^p\int_Xg_{u_{n,\varepsilon}}^pd\mu .
\end{equation}
Note that $g_{\log (u+1)}=(u+1)^{-1}g_{u+1}$ by \cite[Theorem 2.16 or Proposition 2.17]{BB11}. We combine this and \eqref{capun}-\eqref{cacci} with $u_{n,\varepsilon}|_{X^n}\equiv 1$ to obtain that
\[\int_{X^n}{g_{\log (u+1)}}^pd\mu\leq \left (\frac{p}{p-1}\right )^p\varepsilon.
\]Letting $\varepsilon\to 0$, this gives  $g_{\log (u+1)}=0$ on $X^n$ and hence that $u$ is constant on $X^n$. Thus $u$ is constant on $X$ since $n\in \mathbb{N}$ is arbitrary.

		{Let $x_0$ be a closest vertex of the root $0$ of a $K$-regular tree where $K\geq 2$. Then we set
			\[T_{x_0}:=\{y\in X: x_0\in[0,y]\} \text{\rm \ \ and \ \ }T_1:=[0,x_0]\cup T_{x_0}.
			\]
		For any  $n\in \mathbb{N}$, we denote
		\[E_n:=(X\setminus X^n)\cap T_1 \text{\rm \ \  and \ \ }F_n:=(X\setminus X^n)\setminus T_1.
		\]
	}
		 We define the $p$-capacity of the pair $(E_n,F_n)$, denoted $\text{\rm Cap}_p(E_n,F_n)$, by setting
		 \begin{equation}
		 \label{def-cap1}\text{\rm Cap}_p(E_n,F_n)=\inf \left \{\int_Xg_u^pd\mu: u\in N_{\rm loc}^{1,p}(X), u|_{E_n}\equiv 1, u|_{F_n}\equiv 0, 0\leq u\leq 1\right \}.	
		 \end{equation}
		The following lemma follows straightforwardly from the definition \eqref{def-cap1}  of  $p$-capacity.
{		\begin{lem}\label{lem5} {Let $X$ be a $K$-regular tree where $K\geq 2$ and let $1<p<\infty$}.  Then
			the sequence $\{\text{\rm Cap}_p(E_n,F_n)\}_{n=1}^{\infty}$ is non-increasing and there exists a constant $0<M_1<\infty$ such that
			\begin{equation}
			\label{eq-lem3.5}  \text{\rm Cap}_p(E_n,F_n)\leq M_1<\infty \text{\rm \ \  for all $n\in\mathbb N$.}
			\end{equation}			
		\end{lem}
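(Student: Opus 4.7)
\emph{Plan of proof.} The argument splits into two independent pieces: the monotonicity of $\{\text{\rm Cap}_p(E_n, F_n)\}_{n\geq 1}$ and the existence of a uniform finite bound. The former is a direct comparison of admissible sets, and the latter will follow by exhibiting a single admissible function for the pair $(E_1, F_1)$ with finite $p$-energy.

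For the monotonicity, I would observe that $X^n \subset X^{n+1}$ yields $E_{n+1} \subset E_n$ and $F_{n+1} \subset F_n$. Consequently any $u \in N^{1,p}_{\text{\rm loc}}(X)$ with $u|_{E_n} \equiv 1$, $u|_{F_n} \equiv 0$, $0 \leq u \leq 1$ automatically satisfies $u|_{E_{n+1}} \equiv 1$ and $u|_{F_{n+1}} \equiv 0$, so the class of admissible competitors in \eqref{def-cap1} for $(E_{n+1}, F_{n+1})$ contains the one for $(E_n, F_n)$. Taking infima gives $\text{\rm Cap}_p(E_{n+1}, F_{n+1}) \leq \text{\rm Cap}_p(E_n, F_n)$.

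In view of this monotonicity it suffices to control $\text{\rm Cap}_p(E_1, F_1)$. Let $y_1,\ldots,y_{K-1}$ denote the children of the root other than $x_0$ (there is at least one since $K \geq 2$). I would define $v : X \to [0,1]$ by setting $v \equiv 1$ on $T_{x_0}$, $v \equiv 0$ on $[0, y_i] \cup T_{y_i}$ for each $i$, and on the edge $[0, x_0]$ by the weighted interpolant
\begin{equation*}
v(x) = \left( \int_0^1 \lambda(s)^{p/(p-1)} \mu(s)^{-1/(p-1)}\, ds \right)^{-1} \int_0^{|x|} \lambda(s)^{p/(p-1)} \mu(s)^{-1/(p-1)}\, ds.
\end{equation*}
The denominator is finite and positive since $\lambda^p\mu^{-1} \in L^{1/(p-1)}_{\text{\rm loc}}([0,\infty))$ is exactly the local integrability of the integrand. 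The values at $0$ and at $x_0$ match between all adjacent pieces, so $v$ is continuous, belongs to $N^{1,p}_{\text{\rm loc}}(X)$, and satisfies $v|_{E_1} \equiv 1$, $v|_{F_1} \equiv 0$.

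Using the identification of the minimal upper gradient recalled in Section \ref{sec22} (so that $g_v = |v'|/\lambda$ on the single edge where $v$ is non-constant and $g_v \equiv 0$ elsewhere), a direct substitution yields
\begin{equation*}
\int_X g_v^p\, d\mu \;=\; \int_0^1 \frac{\mu(t)}{\lambda(t)^p}\,(v'(t))^p\, dt \;=\; \left( \int_0^1 \lambda(s)^{p/(p-1)} \mu(s)^{-1/(p-1)}\, ds \right)^{-(p-1)} =: M_1,
\end{equation*}
which lies in $(0,\infty)$. Hence $\text{\rm Cap}_p(E_1, F_1) \leq M_1$, and combined with the monotonicity above this gives \eqref{eq-lem3.5} for every $n \in \mathbb{N}$. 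I do not anticipate a genuine obstacle; the only subtle step is the weighted (rather than linear) interpolation on $[0, x_0]$, since a naive linear ramp produces the energy $\int_0^1 \mu(t)/\lambda(t)^p\, dt$, which need not be finite under the paper's hypotheses, whereas the Euler--Lagrange-optimal choice above balances exactly against the local integrability of $\lambda^{p/(p-1)}\mu^{-1/(p-1)}$.
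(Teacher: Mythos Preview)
Your proof is correct, and the monotonicity step matches the paper's. For the upper bound, however, the paper takes a shorter route: it uses a test function $f$ that is piecewise linear \emph{in the $d$-metric} (not in $|x|$) with $f|_{E_1}\equiv 1$, $f|_{F_1}\equiv 0$; such an $f$ is $1/d(E_1,F_1)$-Lipschitz with $g_f$ supported on $X^1$, giving
\[
\text{\rm Cap}_p(E_1,F_1)\le \int_X g_f^p\,d\mu\le \frac{\mu(X^1)}{d(E_1,F_1)^p}<\infty,
\]
which is finite simply because $\mu\in L^1_{\rm loc}$ and $\lambda>0$. Your worry about the ``naive linear ramp'' applies only to a ramp linear in $|x|$, where indeed $\int_0^1 \mu/\lambda^p$ can diverge; linearity in arc length sidesteps this without any weighting. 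Your choice is the Euler--Lagrange minimizer on the single edge and yields the sharper constant $M_1=\bigl(\int_0^1 \lambda^{p/(p-1)}\mu^{-1/(p-1)}\bigr)^{1-p}$, at the cost of invoking the standing hypothesis $\lambda^p\mu^{-1}\in L^{1/(p-1)}_{\rm loc}$; the paper's bound uses only $\lambda,\mu\in L^1_{\rm loc}$.
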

	\begin{proof} It is clear from \eqref{def-cap1} that 
		 $\{\text{\rm Cap}_p(E_n,F_n)\}_{n=1}^{\infty}$ 		 is non-increasing and bounded from above by $\text{\rm Cap}_p(E_1,F_1)$. As the piecewise linear function $f$ with $f|_{E_1}\equiv 1$, $f|_{F_1}\equiv 0$ is admissible for computing $\text{\rm Cap}_p(E_1,F_1)$,  the claim follows since 
		 \[\text{\rm Cap}_p(E_n,F_n)\leq \text{\rm Cap}_p(E_1,F_1)\leq \int_Xg_f^pd\mu\leq  \frac{\mu(X^1)}{d(E_1,F_1)^p}<\infty \text{\rm \ \ for all $n\in\mathbb N$}.
		 \]
	\end{proof}
}
\begin{lem}\label{lem4}
{Let $X$ be a $p$-hyperbolic $K$-regular tree where $K\geq 2$ and $1<p<\infty$}. Suppose that, for each $n\in \mathbb N$, $(\text{\rm int}(X^{n}),d,\mu)$ is doubling  and supports  a $p$-Poincar\'e inequality. Then  {there exist a constant $0<M_2<\infty$ and a  sequence $\{u_n\}_{n=1}^{\infty}$ in $N_{\rm loc}^{1,p}(X)$ } with $u_n|_{E_n}\equiv 1$, $u_n|_{F_n}\equiv 0$, $0\leq u_n\leq 1$ and so that each  $u_n$ is  a nonconstant $p$-harmonic function on $\text{\rm int}(X^n)$ with
\[0< M_2\leq \int_{X^n}g_{u_n}^pd\mu =\text{\rm Cap}_p(E_n,F_n).
\]
\end{lem}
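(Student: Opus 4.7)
The plan is to produce $u_n$ as the $p$-harmonic extension of the prescribed boundary values via the direct method in the calculus of variations, and to obtain the uniform lower bound by reducing the problem to a one-dimensional minimization through radial symmetrization on the tree. The $p$-hyperbolicity hypothesis will enter only in the last step, through the finiteness of $R_p$ guaranteed by Lemma \ref{prop}.

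\textbf{Existence, $p$-harmonicity, nonconstancy.} Set $c_n:=\text{\rm Cap}_p(E_n,F_n)$, which is finite by Lemma \ref{lem5}, and take a minimizing sequence $\{v_k\}$ of admissible functions; by truncation one may assume $0\le v_k\le 1$ and $v_k$ is constant ($0$ or $1$) outside $X^n$. Restricted to $\text{\rm int}(X^{n+1})$, the family $\{v_k\}$ is bounded in $N^{1,p}(\text{\rm int}(X^{n+1}))$, which is reflexive under the doubling and $p$-Poincar\'e hypothesis. Theorem \ref{theorem22} together with Mazur's lemma \ref{lemma23} yields convex combinations converging in norm to some $u_n$; extending by the required boundary values on $E_n\cup F_n$, using Fuglede's lemma \ref{lemma21} to identify the minimal upper gradient, and invoking weak lower semicontinuity gives $\int_X g_{u_n}^p\,d\mu=c_n$, which reduces to $\int_{X^n}g_{u_n}^p\,d\mu$ because $u_n$ is constant outside $X^n$. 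For any $\varphi\in N^{1,p}(\text{\rm int}(X^n))$ with compact support, $u_n+\varphi$ coincides with $u_n$ on $E_n\cup F_n$ and (after truncation against $[0,1]$) is a valid competitor, so minimality yields the $p$-harmonic inequality \eqref{1.sec23} on $\text{\rm int}(X^n)$. Nonconstancy is immediate from the distinct boundary values $0$ and $1$.

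\textbf{Uniform lower bound: the main obstacle.} For any admissible $u$, define a radial symmetrization $\tilde u$ by averaging $u$ at each level $t$ separately over the $K^{j(t)-1}$ edges of $T_{x_0}$ at that level and over the $(K-1)K^{j(t)-1}$ edges of $X\setminus T_1$, while replacing $u|_{[0,x_0]}$ by the one-dimensional energy-optimal profile with endpoint values $\tilde u(0)$, $\tilde u(x_0)$. Jensen's inequality for $t\mapsto |t|^p$ applied level-wise to the upper gradient gives $\int_X g_{\tilde u}^p\,d\mu\le\int_X g_u^p\,d\mu$, and continuity of $u$ makes $\tilde u$ continuous at $0$ and $x_0$, hence admissible. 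The symmetrized admissible functions are parametrized by $c:=\tilde u(x_0)$, $d:=\tilde u(0)\in[0,1]$, and a direct Euler-Lagrange computation on the three pieces $T_{x_0}\cap(X^n\setminus\{x_0\})$, $(X\setminus T_1)\cap X^n$, $[0,x_0]$ yields the optimal radial energy
\[E(c,d,n)=\frac{1}{K}\left[\frac{(1-c)^p}{(A_n-A_1)^{p-1}}+\frac{(K-1)d^p}{A_n^{p-1}}+\frac{|c-d|^p}{A_1^{p-1}}\right],\]
where $A_n:=\int_0^n\lambda(t)^{p/(p-1)}\mu(t)^{1/(1-p)}K^{j(t)/(1-p)}\,dt\nearrow R_p$. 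Since $R_p<\infty$ by Lemma \ref{prop}, $E(c,d,n)\ge E(c,d,\infty)$ (obtained by replacing $A_n$ by $R_p$), and the latter is continuous and strictly positive on $[0,1]^2$: its three nonnegative summands cannot simultaneously vanish, since that would require $c=1$, $d=0$, and $c=d$. Setting $M_2:=\min_{[0,1]^2}E(\cdot,\cdot,\infty)>0$ then gives $\text{\rm Cap}_p(E_n,F_n)\ge M_2$ uniformly in $n$, completing the proof. The chief technical point is the symmetrization step, where continuity of $\tilde u$ across the vertices and the combinatorial edge counts at each level must be tracked carefully so that Jensen yields the displayed formula.
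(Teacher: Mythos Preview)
Your proof is correct. The existence and $p$-harmonicity part follows the same direct-method route as the paper (reflexivity of $N^{1,p}(\text{int}(X^{n+1}))$, Mazur's lemma, and minimality against compactly supported perturbations). For the uniform lower bound $M_2$, however, you take a genuinely different path. The paper argues by a dichotomy on the value $u_n(0)$: if $u_n(0)<1/2$ then $\max\{0,\min\{1,2(1-u_n)\}\}$ is admissible for $\text{Cap}_p(\{0\},E_n)$, while if $u_n(0)\ge 1/2$ then $\max\{0,\min\{1,2u_n\}\}$ is admissible for $\text{Cap}_p(\{0\},F_n)$; either way $\text{Cap}_p(E_n,F_n)\ge 2^{-p}\min\{\text{Cap}_p(\{0\},E_n),\text{Cap}_p(\{0\},F_n)\}$, and these single-point capacities dominate $\text{Cap}_p^{T_1}(\{0\})$ and $\text{Cap}_p^{X\setminus T_1}(\{0\})$, which are positive because $R_p<\infty$ forces both subtrees to be $p$-hyperbolic via Lemma~\ref{prop}. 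Your approach instead symmetrizes radially on $T_{x_0}$ and on $X\setminus T_1$ separately, reducing the capacity to the explicit two-parameter minimum $\min_{c,d\in[0,1]}E(c,d,n)$, and then uses $A_n\nearrow R_p<\infty$ to bound this below by the strictly positive continuous function $E(\cdot,\cdot,\infty)$ on a compact square. Your route is more computational but yields an exact closed-form expression for $\text{Cap}_p(E_n,F_n)$ as a byproduct; the paper's argument is shorter and avoids verifying that the level-wise average is continuous across every integer level (which does hold, since the $K^{j(t)-1}$ points above a vertex collapse to that vertex as $t$ approaches the integer from above, but you leave this implicit).
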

\begin{proof}
Let $n\in\mathbb N$. By the definition \eqref{def-cap1} of $\text{\rm Cap}_p(E_n,F_n)$, there exists a sequence  $\{u_{n,m}\}_{m=1}^{\infty}$ in {$N_{\rm loc}^{1,p}(X)$} with $u_{n,m}|_{E_n}\equiv 1$, $u_{n,m}|_{F_n}\equiv 0$, $0\leq u_{n,m}\leq 1$ such that 
\begin{equation}
\label{capef} \text{\rm Cap}_p(E_n,F_n)\leq \int_Xg_{u_{n,m}}^pd\mu\leq \text{\rm Cap}_p(E_n,F_n)+\frac{1}{m}.
\end{equation}
By Lemma  \ref{lem5}, there is a constant $0<M_1<\infty$ such that 
\begin{equation}
\label{eq318}{\text{\rm Cap}_p(E_n,F_n)< M_1<\infty \text{\rm \ \ for all $n\in\mathbb N$}} 
\end{equation}
and hence $\{g_{u_{n,m}} \}_{m=1}^{\infty}$ is bounded in $L^p(X)$. We have from  $\mu(X^{n+1})<\infty$, $0\leq u_{n,m}\leq 1$ that $\{u_{n,m}\}_{m=1}^{\infty}$ is bounded in $ L^p(X^{n+1})$. Then $\{u_{n,m}\}_{m=1}^{\infty}$ is bounded in $N^{1,p}(X^{n+1})$. We note that $N^{1,p}(\text{\rm int}(X^{n+1}))$ is a reflexive space since  $(\text{\rm int}(X^{n+1}),d,\mu)$ is doubling and supports a $p$-Poincar\'e  inequality, see \cite[Theorem 4.48]{C99}. Hence Theorem \ref{theorem22} gives that there is a  subsequence $\{u_{n,m_k}\}_{k=1}^\infty$ which converges weakly to some $u_n\in N^{1,p}(\text{\rm int}(X^{n+1})).$ By Mazur's Lemma  \ref{lemma23}, there is a sequence of convex combinations $f_k$ which converges to $u_n$ in $N^{1,p}(\text{\rm int}(X^{n+1}))$:
\begin{equation}
\label{fk}f_k:=\sum_{i=k}^{N_k}a_{i,k}u_{n,i}
\end{equation}
where $a_{i,k}\geq 0, \sum_{i=k}^{N_k}a_{i,k}=1$, $u_{n,i}\in \{u_{n,m_k}\}_{k=1}^\infty$.  We may assume that $f_k(x)$ converges pointwise to $u_n(x)$ as $k\to\infty$ on $\text{\rm int}(X^{n+1})$. {It is  easy  to see  that  $u_n|_{\text{\rm int}(X^{n+1})\cap  E_n}\equiv 1$ and $u_n|_{\text{\rm int}(X^{n+1})\cap F_n}\equiv 0$ from \eqref{fk}  since  $u_{n,i}|_{\text{\rm int}(X^{n+1})\cap E_n}\equiv 1$, $u_{n,i}|_{\text{\rm int}(X^{n+1})\cap F_n}\equiv 0$ for all $i$. We extend $u_n$ by setting $u_n|_{E_n\setminus\text{\rm int}(X^{n+1})}\equiv 1$, $u_n|_{F_n\setminus\text{\rm int}(X^{n+1})}\equiv 0$}. Then
\begin{equation}
{\label{un} u_n\in L_{\rm loc}^p(X) \text{ with  }u_n|_{E_n}\equiv 1, u_n|_{F_n}\equiv 0, 0\leq u_n\leq  1.}
\end{equation}
Next, 
we have by the convexity  of the function $t\mapsto t^p$  that 
\[
\int_{X^n}g_{f_k}^pd\mu\leq \sum_{i=k}^{N_k}a_{i,k}\int_{{X^n}}g_{u_{n,i}}^pd\mu.
\]
By the triangle inequality, this gives
\begin{align}
\left (\int_{X^n}g_{u_n}^pd\mu \right )^{\frac{1}{p}} &\leq \left (\int_{X^n}g_{f_k-u_n}^pd\mu \right )^{\frac{1}{p}} + \left (\int_{X^n}g_{f_k}^pd\mu \right )^{\frac{1}{p}} \notag \\ 
&\leq \left (\int_{X^n}g_{f_k-u_n}^pd\mu \right )^{\frac{1}{p}}  + \left (\sum_{i=k}^{N_{k}} a_{i,k}\int_{X^n}g_{u_{n,i}}^pd\mu \right )^{\frac{1}{p}}.\label{eq39}
\end{align} 
According to  \eqref{capef} and $\sum_{i=k}^{N_k}a_{i,k}=1$,
\begin{align}
\left (\sum_{i=k}^{N_{k}} a_{i,k}\int_{X^n}g_{u_{n,i}}^pd\mu \right )^{\frac{1}{p}}&\leq  \left (\sum_{i=k}^{N_{k}} a_{i,k} \text{\rm Cap}_p(E_n,F_n) + \sum_{i=k}^{N_{k}} a_{i,k} \frac{1}{i}\right )^{\frac{1}{p}} \notag \\
&\leq   \left (\text{\rm Cap}_p(E_n,F_n) + \frac{1}{k}\right )^{\frac{1}{p}}.\label{eq310}
\end{align}
Substituting \eqref{eq310} into \eqref{eq39} and combining with  $f_k\to u_n$ in $N^{1,p}(\text{\rm int}(X^{n+1}))$ as $k\to\infty$, we obtain that 
\begin{align}
\left (\int_{X^n}g_{u_n}^pd\mu \right )^{\frac{1}{p}} 
&\leq  \left (\int_{X^n}g_{f_k-u_n}^pd\mu \right )^{\frac{1}{p}}  + \left ( \text{\rm Cap}_p(E_n,F_n) +  \frac{1}{k}\right )^{\frac{1}{p}} 
\to \text{\rm Cap}_p(E_n,F_n) ^{\frac{1}{p}}\notag 
\end{align} as $k\to \infty$. Then the above estimate gives via \eqref{eq318} and {$u_n|_{F_n}\equiv0, u_n|_{E_n}\equiv 1$} that $g_{u_n}\in L^{p}(X)$. Combining this with \eqref{un} yields {$u_n\in N_{\rm loc}^{1,p}(X)$} with $u_n|_{E_n}\equiv 1$, $u_n|_{F_n}\equiv 0$, $0\leq u_n\leq 1$ and hence $u_n$ is admissible for computing the capacity $\text{\rm Cap}_p(E_n,F_n) $.  It follows from this and the above estimate that
\begin{equation}
\label{capef=u} \int_{X^n}g_{u_n}^pd\mu=\text{\rm Cap}_p(E_n,F_n) \text{\rm \ \ for all $n\in \mathbb N.$}
\end{equation} 
We conclude from \eqref{eq318},\eqref{un},\eqref{capef=u}
that {there are a constant $0<M_1<\infty$ and a function $u_n\in N_{\rm loc}^{1,p}(X)$ with $u_n|_{E_n}\equiv 1, u_n|_{F_n}\equiv 0$, and  $0\leq u_n\leq 1$ such that}
\begin{equation}
\label{finite}{ \int_{X^n}g_{u_n}^pd\mu=\text{\rm Cap}_p(E_n,F_n)\leq M_1<\infty \text{\rm \ \ for all $n\in\mathbb N$}.}
\end{equation}
{
We now prove that there is a constant $0<M_2<\infty$ such that 
\begin{equation}
\label{constant-m} 0<M_2\leq \int_{X^n}g_{u_n}^pd\mu=\text{\rm Cap}_p(E_n,F_n) \text{\rm \ \ for all $n\in\mathbb N$}.
\end{equation}
Let $n\in\mathbb N$ and $u_n$ be as in \eqref{finite}. If $u_n(0)<1/2$, we define $v:=\max\{0,\min\{1,2(1-u_n)\}\}$. Then $v\in N^{1,p}_{\rm loc}(X)$ with $v(0)=1$, $v|_{E_n}\equiv 0$, and hence
\[\text{\rm Cap}_p(\{0\},E_n)\leq \int_X g_v^pd\mu \leq 2^p\int_{X^n}g_{u_n}^pd\mu= 2^p\text{\rm Cap}_p(E_n,F_n).
\] If $u_n(0)\geq 1/2$, we define $w:=\max\{0,\min\{1,2u_n\}\}$. Then $w\in N^{1,p}_{\rm loc}(X)$ with $w(0)=1$, $w|_{F_n}\equiv 0$, and hence
\[\text{\rm Cap}_p(\{0\},F_n)\leq \int_X g_w^pd\mu \leq 2^p\int_{X^n}g_{u_n}^pd\mu= 2^p\text{\rm Cap}_p(E_n,F_n).
\]Combining the above estimates for each $n\in\mathbb N$, we obtain that 
	\[ \inf_{n\in\mathbb N}\min\{\text{\rm Cap}_p(\{0\},E_n), \text{\rm Cap}_p(\{0\},F_n)\}\leq 2^p \int_{X^n}g_{u_n}^pd\mu=2^p\text{\rm Cap}_p(E_n,F_n) \text{\rm \ \ for all $n\in\mathbb N$}.
	\]
To obtain \eqref{constant-m}, we show that 
\begin{equation}
\label{positive-m}M_2:=\inf_{n\in\mathbb N}\min\{\text{\rm Cap}_p(\{0\},E_n), \text{\rm Cap}_p(\{0\},F_n)\}>0.
\end{equation}
Applying Lemma \ref{lem1} and Lemma \ref{prop} for the subtrees $T_1$ and $X\setminus T_1$, we obtain that 
\[\text{\rm $T_1$ is $p$-hyperbolic if and only if $R_p^{T_1}:=\frac{1}{K}R_p<\infty$}
\]
and
\[\text{\rm $X\setminus T_1$ is $p$-hyperbolic if and only if $R_p^{X\setminus T_1}:=\frac{K-1}{K}R_p<\infty$}.
\]
Notice that if $X$ is $p$-hyperbolic, then both $T_1$ and $X\setminus T_1$ are $p$-hyperbolic since $R_p<\infty$ implies finiteness of $R_p^{T_1}$ and $R_p^{X\setminus T_1}$. By Lemma \ref{prop} with the $p$-hyperbolicity of $T_1$ and $X\setminus T_1$, it follows that 
\[ \text{\rm Cap}_p(\{0\},E_n) \geq \text{\rm Cap}_p^{T_1}(\{0\}):=\inf\left \{ \int_{T_1}g_u^pd\mu: u\in N^{1,p}_0(T_1), u(0)=1\right \}>0
\]
and
\[ \text{\rm Cap}_p(\{0\},F_n) \geq \text{\rm Cap}_p^{X\setminus T_1}(\{0\}):=\inf\left \{ \int_{X\setminus T_1}g_u^pd\mu: u\in N^{1,p}_0(X\setminus T_1), u(0)=1\right \}>0
\]
for each $n\in\mathbb N$. Hence $\eqref{positive-m}$ holds.
}

Finally, we only need to show that $u_n$ is a $p$-harmonic function on $\text{\rm int}(X^n)$. Let $\varphi$ be an arbitrary element of $N^{1,p}(\text{\rm int}(X^n))$ with compact support $\text{\rm spt}(\varphi)\subset \text{\rm int}(X^n)$. By choosing $v=\max\{0,\min\{1,u_n+\varphi\} \}$ we have  that $v\in N^{1,p}(\text{\rm int}(X^n))$ with $v|_{E_n}\equiv 1, v|_{F_n}\equiv 0, 0\leq v\leq 1$ because $\text{\rm spt}(\varphi)\subset \text{\rm int}(X^n)$ and because \eqref{un} holds. It follows from the definition \eqref{def-cap1} of $\text{\rm Cap}_p(E_n,F_n)$ that 
\[
\text{\rm Cap}_p(E_n,F_n)\leq \int_{X^n}g_v^pd\mu \leq \int_{X^n}g_{u_n+\varphi}^pd\mu .
\]Combining this with \eqref{capef=u}, we obtain that 
\[\int_{X^n}g_{u_n}^pd\mu\leq \int_{X^n}g_{u_n+\varphi}^pd\mu 
\]for all $\varphi\in N^{1,p}(\text{\rm int}(X^n))$ with compact support $\text{\rm spt}(\varphi)\subset \text{\rm int}(X^n)$. Hence $u_n$ is  $p$-harmonic  on $\text{\rm int}(X^n)$, and the claim follows.
\end{proof}
\begin{lem}\label{lem6}
{Let $X$ be a $p$-hyperbolic $K$-regular tree where $K\geq 2$ and $1<p<\infty$.} Suppose that $(\text{\rm int}(X^{n}),d,\mu)$ is doubling  and supports  a $p$-Poincar\'e inequality   for  each $n\in\mathbb{N}$. Then there exists a nonconstant nonnegative bounded $p$-harmonic function $u$ on $X$ with $0<\int_Xg_u^pd\mu <\infty$.
\end{lem}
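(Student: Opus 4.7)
The plan is to obtain $u$ as a locally uniform subsequential limit of the sequence $\{u_n\}_{n\geq 1}$ produced by Lemma \ref{lem4}. Recall that each $u_n\in N^{1,p}_{\rm loc}(X)$ is $p$-harmonic on $\text{\rm int}(X^n)$, satisfies $0\leq u_n\leq 1$ with $u_n|_{E_n}\equiv 1$ and $u_n|_{F_n}\equiv 0$, and obeys the uniform bounds $0<M_2\leq\int_X g_{u_n}^p\,d\mu\leq M_1<\infty$. For each fixed $m\in\mathbb{N}$, the sequence $\{u_n\}_{n>m}$ is bounded in $N^{1,p}(\text{\rm int}(X^{m+1}))$, which is reflexive under the doubling and $p$-Poincar\'e hypotheses by \cite[Theorem 4.48]{C99}. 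Theorem \ref{theorem22} and a standard diagonal argument over $m$ then yield a subsequence, still denoted $\{u_n\}$, weakly convergent on every $\text{\rm int}(X^m)$ to some $u\in N^{1,p}_{\rm loc}(X)$ with $0\leq u\leq 1$.

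To strengthen this to locally uniform convergence I would invoke Theorem \ref{theorem26}: each $u_n$ is locally $\alpha$-H\"older on $\text{\rm int}(X^n)$ with constants controlled by the uniformly bounded $N^{1,p}$-norm, so Arzel\`a--Ascoli combined with another diagonal extraction gives $u_n\to u$ locally uniformly on $X$ (the continuous uniform limit must coincide with the weak limit). Theorem \ref{theorem25} then shows that $u$ is $p$-harmonic on every $\text{\rm int}(X^m)$, and Theorem \ref{theorem24} promotes this to $p$-harmonicity on all of $X$. Pointwise convergence forces $0\leq u\leq 1$, and weak lower semicontinuity yields $\int_X g_u^p\,d\mu\leq\liminf_n\int_X g_{u_n}^p\,d\mu\leq M_1<\infty$, so boundedness, nonnegativity, and finiteness of the energy are all in hand.

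The main obstacle is showing $u$ is nonconstant, since weak convergence alone does not transfer the lower bound $M_2$ to $\int_X g_u^p\,d\mu$. My plan is to argue pointwise: choose the child $x_0\in T_1$ of the root (used in defining $T_1$) and another child $y_0\in X\setminus T_1$ of the root, and show $u(x_0)\neq u(y_0)$. Restricting $1-u_n$ to the rooted subtree $T_{x_0}$ and rescaling by $1-u_n(x_0)$ produces an admissible function for $\text{\rm Cap}_p^{T_{x_0}}(\{x_0\},T_{x_0}\setminus X^n)$; since $T_{x_0}$ inherits $p$-hyperbolicity from $X$ (as already exploited in the proof of Lemma \ref{lem4}), the subtree analogue of Lemma \ref{prop} gives a positive lower limit for this capacity as $n\to\infty$. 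The resulting inequality $(1-u_n(x_0))^p\leq M_1/\text{\rm Cap}_p^{T_{x_0}}(\{x_0\},T_{x_0}\setminus X^n)$ keeps $u_n(x_0)$ bounded below; a symmetric argument on the subtree containing $y_0$ bounds $u_n(y_0)$ from above. Together with the monotonicity of $u_n$ along geodesics from the root (provided by the comparison principle for $p$-harmonic functions, which is available under the doubling and $p$-Poincar\'e assumptions), one may choose $x_0$ and $y_0$, possibly deep in their respective subtrees, so the two bounds are strictly separated; passing to the local uniform limit then yields $u(x_0)\neq u(y_0)$, so $u$ is nonconstant. This subtree-capacity separation is the technically delicate step; the remaining ingredients are standard Newtonian Sobolev compactness and stability.
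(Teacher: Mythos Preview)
Your construction of the limit $u$ matches the paper's: take the sequence from Lemma \ref{lem4}, apply Theorem \ref{theorem26} to get local equicontinuity, extract via Arzel\`a--Ascoli, and invoke Theorems \ref{theorem25} and \ref{theorem24} for $p$-harmonicity. The finiteness of $\int_X g_u^p\,d\mu$ is also fine, whether via weak lower semicontinuity as you write or (as the paper does more explicitly) via Mazur's lemma and Fuglede.

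The nonconstancy argument is where your proposal breaks down. The inequality $(1-u_n(x_0))^p\leq M_1/\text{\rm Cap}_p^{T_{x_0}}(\{x_0\},T_{x_0}\setminus X^n)$ is correct but only informative when the subtree capacity exceeds $M_1$; otherwise it says nothing beyond $u_n(x_0)\geq 0$. There is no reason to expect $\text{\rm Cap}_p^{T_{x_0}}(\{x_0\})>M_1$ for any fixed $x_0$, and ``going deep'' does not help: in the homogeneous case $\lambda\equiv\mu\equiv 1$ every subtree $T_{x_0}$ is isometric and measure-equivalent to $X$ itself, so the subtree point-capacity is the same at every vertex, while $M_1$ is governed by $\text{\rm Cap}_p(E_1,F_1)$ and may well be larger. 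The symmetric bound at $y_0$ has the same defect, so the two pointwise bounds need not separate. Monotonicity of $u_n$ along radial geodesics (which, incidentally, comes from the radial symmetry of the boundary data on each half of $X$, not directly from the comparison principle) cannot rescue this, since you need a gap that is uniform in $n$ at a \emph{fixed} pair of points.

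The paper proves nonconstancy by a different, indirect route. Assuming $u$ constant, each $u_n$ is monotone on every edge $[x,y]$ by the strong maximum principle on a one-dimensional segment, so $\int_{[x,y]}g_{u_n}\,ds=|u_n(x)-u_n(y)|\to 0$. Passing through Mazur and Fuglede, the $L^p$-limit $g$ of convex combinations $\bar g_n$ of the $g_{u_i}$ then satisfies $\int_{[x,y]}g\,ds=0$ on every edge, hence $g=0$ a.e. On the other hand, those same convex combinations $\bar u_n$ remain admissible for $\text{\rm Cap}_p(E_{N_n},F_{N_n})\geq M_2>0$, forcing $\int_X g^p\,d\mu>0$; contradiction. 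The key point you are missing is that the lower bound $M_2$ \emph{does} transfer---not to $g_u$ directly, but to the auxiliary limit $g$, and this suffices once one exploits the one-dimensional edge structure of the tree.
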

\begin{proof}
	We first  prove  that there exists  a nonnegative  bounded $p$-harmonic function  $u$ on $X$. Towards  this, 
 Lemma \ref{lem5} and Lemma \ref{lem4} give  that { there exist  constants $0<M_2\leq M_1<\infty$ and a sequence  $\{u_n\}_{n=1}^{\infty}$ in $N_{\rm loc}^{1,p}(X)$ with} $u_n|_{E_n}\equiv 1$, $u_n|_{F_n}\equiv 0$, $0\leq u_n\leq 1$  such that $u_n$ is a nonconstant $p$-harmonic function on $\text{\rm int}(X^n)$ and
\begin{equation}
\label{36}0<{M_2}\leq \int_{X^n}g_{u_n}^pd\mu=\text{\rm Cap}_p(E_n,F_n) \leq M_1<\infty \text{\rm \ \ for all $n\in\mathbb N$.}
\end{equation}
 Let $n_0\in\mathbb N$ be arbitrary.  It follows from the local H\"older continuity of $p$-harmonic functions (see Theorem \ref{theorem26}) that  $\{u_n\}_{n\geq n_0}$ is  equibounded and locally equicontinuous on $\text{\rm int}(X^{n_0+1})$. By the Arzel\`{a}-Ascoli theorem, there exists a subsequence, still denoted $\{u_n\}_{n\geq n_0}$,  that converges to $u$  uniformly on $X^{n_0}$  as $n\to\infty$. Since $n_0$ is arbitrary, by uniqueness of locally uniform convergence, we may assume that $u_n$ converges to $u$ locally uniformly in $X$ as  $n\to\infty$. Then $\{u_i\}_{i\geq n}$ is a sequence of $p$-harmonic functions on $\text{\rm int}(X^n)$ which converge  uniformly to $u$  in $\text{\rm int}(X^{n})$ for each $n$, and hence by Theorem \ref{theorem25} and Theorem \ref{theorem24} we obtain that $u$  is a $p$-harmonic function on $X$. Thus $u$ is a nonnegative bounded $p$-harmonic function on $X$ since  $0\leq u_n\leq 1$.
 
 We  next  show  that 
 \[0<\int_Xg_u^pd\mu<\infty.
 \]
 It follows from \eqref{36}  that $\{g_{u_n}\}_{n=1}^{\infty}$ is a bounded sequence in the reflexive space $L^{p}(X)$, and hence Theorem \ref{theorem22} and Mazur's Lemma  \ref{lemma23} give that there exists $g\in L^{p}(X)$ and a convex combination sequence $\bar g_n=\sum_{i=n}^{N_n}a_{i,n}g_{u_i}$ with $a_{i,n}\geq 0, \sum_{i=n}^{N_n}a_{i,n}=1$ such that $\bar g_n\to g$ in $L^p(X)$ as $n\to \infty$. By Fuglede's Lemma  \ref{lemma21}, we obtain that there is a subsequence, still denoted $\bar g_n$, such that 
 \[\lim_{n\to \infty}\int_{[x,y]}\bar g_nds=\int_{[x,y]}gds\]for every curve $[x,y]$. Note that $\bar g_n=\sum_{i=n}^{N_n}a_{i,n}g_{u_i}$ is an upper gradient of $\bar u_n= \sum_{i=n}^{N_n}a_{i,n}u_i$ and $\bar u_n$ converges to $u$ locally uniformly in $X$ as $n\to \infty$. Hence 
 \[|u(x)-u(y)|=\lim_{n\to\infty}|\bar u_n(x)-\bar u_n(y)|\leq \lim_{n\to\infty}\int_{[x,y]}\bar g_nds=\int_{[x,y]}gds
 \] for every curve $[x,y]$. Then $g$ is an upper gradient of $u$ and hence $g_u\leq g$ a.e.. Combining this with $\bar g_n\to g$ in $L^p(X)$ as $n\to\infty$ and with the convexity of the function $t\mapsto t^p$ yields
 \begin{equation}\label{37}
 \int_{X}g_u^pd\mu \leq \int_{X}g^pd\mu =\lim_{n\to\infty}\int_X \bar g_n^pd\mu  \leq \lim_{n\to\infty} \sum_{i=n}^{N_n}a_{i,n}\int_Xg_{u_{i}}^pd\mu.
 \end{equation}
 Notice that $\{\text{\rm Cap}_{p}(E_n,F_n)\}_{n=1}^{\infty}$ is a nonincreasing sequence by Lemma \ref{lem5}. Hence we have by 
 {$u_i|_{F_i}\equiv0, u_i|_{E_i}\equiv 1$} and \eqref{36} that 
 \begin{align*}
 \lim_{n\to\infty} \sum_{i=n}^{N_n}a_{i,n}\int_Xg_{u_{i}}^pd\mu&= \lim_{n\to\infty}\sum_{i=n}^{N_n}a_{i,n}\int_{X^i}g_{u_i}^pd \mu  = \lim_{n\to\infty}\sum_{i=n}^{N_n}a_{i,n}\text{\rm Cap}_p(E_{i},F_i)\\
 & \leq \lim_{n\to\infty}\sum_{i=n}^{N_n}a_{i,n}\text{\rm Cap}_p(E_{n},F_{n})= \lim_{n\to\infty}\text{\rm Cap}_p(E_{n},F_{n})<\infty.
 \end{align*}
 Substituting the above estimate into \eqref{37} yields
  $\int_Xg_u^pd \mu<\infty$. 
  
  It remains to show that $\int_Xg_u^pd\mu>0$. 
  The preceding being understood, we argue by contradiction and assume that $\int_Xg_u^pd\mu=0$. Then $u$ is constant on $X$.
Recall that $\bar g_n=\sum_{i=n}^{N_n}a_{i,n}g_{u_i}$ with $a_{i,n}\geq 0, \sum_{i=n}^{N_n}a_{i,n}=1$ are such that $\bar g_n\to g$ in $L^p(X)$ as $n\to \infty$
 and 
\begin{equation}
\label{gds}\lim_{n\to\infty}\int_{[x,y]}\bar g_{n}ds=\int_{[x,y]}gds
\end{equation} for every curve $[x,y]$.  
 Moreover, $\bar{g}_n$ is an upper gradient of $\bar{u}_n=\sum_{i=n}^{N_n}a_{i,n}{u_i}$ and  $\bar{u}_n$ is admissible for computing the capacity $\text{\rm Cap}_p(E_{N_n},F_{N_n})$, and so 
\[\int_X\bar{g}_n^pd \mu \geq \int_{X}g_{\bar{u}_n}^pd\mu\geq \text{\rm Cap}_p(E_{N_n},F_{N_n}).
\]Combining this with  \eqref{36} and using $\bar{g}_n\to g$ in $L^p(X)$ as $n\to\infty$ yields
\begin{equation}
\label{gdmu}\int_Xg^pd\mu >0.
\end{equation}
Let $\varepsilon>0$ be arbitrary, and let $[x,y]$ be an arbitrary edge  in $X$. 
Since  $u_n$ converges to $u$ locally uniformly in $X$, there exist positive constants $N, r$ only depending on $x, y$ such that for all $n\geq N$,
\[\sup_{t\in B(x,r)}|u_n(t)-u(t)|<\varepsilon, \sup_{t\in B(y,r)}|u_n(t)-u(t)|<\varepsilon
\] where $B(x,r), B(y,r)$ are balls with centers $x, y$ and  radius $r$, respectively. Since $u$ is constant, the above estimates  yield that  for all $n\geq N$, 
\[
|{u}_n(x)-{u}_n(y)|\leq |u_n(x)-u(x)|+|u(y)-u_n(y)|<2\varepsilon.
\] 
By Section \ref{sec22}, $u_n$ is absolutely continuous on $[x,y]$ and $g_{u_n}(z)=|u_n'(z)|/\lambda(z)$ for $z\in [x,y]$. By the strong maximum principle (see for instance \cite[Corollary 6.5]{KS01}), $u_n$ is a monotone function on $[x,y]$. Hence
\[|u_n(x)-u_n(y)|=\int_{[x,y]}|u_n'(z)|dz.
\]
Since  the minimal upper gradient $g_{u_n}$ of $u_n$ satisfies $g_{u_n}(x)=|u'_n(x)|/\lambda(z)$, we conclude that 
\[\int_{[x,y]}g_{u_n}ds\leq 2\varepsilon
\] for all $n\geq N$.
By $\eqref{gds}$, it follows that
\[\int_{[x,y]}gds=\lim_{n\to \infty}\int_{[x,y]}\bar g_nds=\lim_{n\to \infty}\sum_{i=n}^{N_n}a_{i,n}\int_{[x,y]}g_{u_i}ds<2\varepsilon.
\] Letting $\varepsilon\to 0$, we obtain that $g=0$ a.e. on $[x,y]$. As $[x,y]$ is arbitrary, we conclude that  $g=0$ a.e.  which contradicts  \eqref{gdmu}. This completes the proof.
\end{proof}
\begin{proof}[Proof of Theorem \ref{main theorem}]
	$X$ is $p$-parabolic $\Leftrightarrow$ $(2.)$ is given by Lemma \ref{lem1}. 
	
	$(1.)\Leftrightarrow (2.) \Leftrightarrow (3.)$ is given by Lemma \ref{prop}.
\end{proof}
\begin{proof}[Proof of Theorem \ref{main theorem2}] 
	$X$ is $p$-parabolic  $ \Rightarrow (1.)$ is given by Lemma \ref{lem2}.
	
	$(1.)\Rightarrow (2.)$ is trivial.	
	
	$(2.) \Rightarrow (3.)$: Let $u$ be a bounded $p$-harmonic function on $X$. Then there exists a constant $C>0$ such that $u+C$ is a nonnegative $p$-harmonic function on $X$. Hence $u+C$ is constant by the assumption and so $u$ is constant.
	
	$(3.)\Rightarrow (4.)$ is trivial.
	
	$(4.)\Rightarrow$ $X$ is $p$-parabolic is given by Lemma \ref{lem6}. 
\end{proof}
The following example shows that Theorem \ref{main theorem} is not true for general metrics and measures.
\begin{example}\label{example}Let $1<p<\infty$. There exists a $p$-hyperbolic $K$-regular tree $X$ with a distance and a ``{non-radial}'' measure such that $R_p=\infty$.
\end{example}
Let us begin with some notation. For simplicity,  let $X$ be a dyadic tree (which means $K=2$). Then the root $0$ of our tree has two closest vertices, denoted $v_1$ and $v_2$. We denote
\[T_1=[0,v_1]\cup \{x\in X: v_1 \in[0,x]\} \text{ and }T_2=[0,v_2]\cup \{x\in X: v_2\in[0,x]\}.
\]Note that the union of $T_1$ and $T_2$ is our tree. Suppose $\lambda_i, \mu_i:[0, \infty)\rightarrow (0, \infty)$ satisfy $\lambda_i, \mu_i\in L^1_{\rm loc}([0, \infty))$, for $i=1,2$. We introduce a measure $\mu$ and a metric $d$ via $ds$ by setting
\begin{equation*}
d\mu(x)=\mu_i(|x|)\,d|x|,\ \  ds(x)=\lambda_i(|x|)\, d|x|,
\end{equation*}
for all $x\in T_i$, for $i=1,2$.
To obtain what we desire, we choose $\lambda_1\equiv \mu_1\equiv 1$ and $\lambda_2\equiv 1, \mu_2(x)=2^{-j(x)}$. Define a metric $d$ and a measure $\mu$ as above. Then
	\[R_p|_{T_1}:=\frac{1}{2}\int_0^\infty \lambda_1^{\frac{p}{p-1}}(t)\mu_1^{\frac{1}{1-p}}(t)2^{\frac{j(t)}{1-p}}dt<\infty, \ \ \ 
	R_p|_{T_2}:=\frac{1}{2}\int_0^\infty \lambda_2^{\frac{p}{p-1}}(t)\mu_2^{\frac{1}{1-p}}(t)2^{\frac{j(t)}{1-p}}dt=\infty,
	\] and $R_p=R_p|_{T_1}+R_p|_{T_2}=\infty$. By Theorem \ref{main theorem} for the subtree $T_1$ with $R_p|_{T_1}<\infty$, we obtain that $T_1$ is $p$-hyperbolic. Hence there exists a compact set $O$ in $T_1$ such that $\text{\rm Cap}_p^{T_1}(O)>0$, where 
	\[\text{\rm Cap}_p^{T_1}(O):=\inf\left \{\int_{T_1} g_u^pd\mu: u|_O\equiv 1, u\in N^{1,p}_0(T_1)\right \}.
	\]
	Let $O$ be such a set. Then $O$ is bounded in $T_1$ by Corollary \ref{cor2.2}. Let $u\in N^{1,p}_0(X)$ be an arbitrary function with $u|_O\equiv 1$. It follows from $u\in N^{1,p}_0(X)$ that there exists a sequence $u_n\in N^{1,p}(X)$ with compact support $\text{\rm spt}(u_n)\subset X$ such that $u_n\to u$ in $N^{1,p}(X)$ as $n\to \infty$. By Corollary \ref{cor2.2}, we may assume that $\text{\rm spt}(u_n)\subset X^n$ for each $n$. Hence $\text{\rm spt}(u_n)\cap T_1 \subset X^n\cap T_1$, and so $\text{\rm spt}(u_n)\cap T_1$ is compact in $T_1$ because $\text{\rm spt}(u_n)\cap T_1$ is a closed set in $T_1$ and $X^n\cap T_1$ is compact in $T_1$. Then for each $n$, $u_n\in N^{1,p}(T_1)$ with compact support $\text{\rm spt}(u_n)$ and $u_n\to u$ in $N^{1,p}(T_1)$ as $n\to \infty$, and hence $u\in N^{1,p}_0(T_1)$ with $u|_O\equiv 1$.
	 Thus $u$ is admissible for computing $\text{\rm Cap}_p^{T_1}(O)$ and so
	\[\int_{X}g_u^pd\mu\geq \int_{T_1}g_u^pd\mu\geq \text{\rm Cap}_p^{T_1}(O).
	\]
	Since $u\in N^{1,p}_0(X)$ with $u|_O\equiv 1$ is arbitrary, the above estimate gives 
	\[\text{\rm Cap}_p(O)\geq \text{\rm Cap}_p^{T_1}(O).
	\]
	Combining this with $\text{\rm Cap}_p^{T_1}(O)>0$, we have $\text{\rm Cap}_p(O)>0$. Thus $X$ is $p$-hyperbolic.

\begin{example}
	\label{example2}Let $1<p<\infty$. There exist both  $p$-hyperbolic and $p$-parabolic $K$-regular trees $(X,d,\mu)$ that are  doubling and  support  a  $p$-Poincar\'e inequality. 
\end{example}
We begin with the $p$-hyperbolic case.
Let $\mu(t)=e^{-\beta j(t)}$ and $\lambda(t)=e^{-\varepsilon j(t)}$ with $\varepsilon, \beta>0$ and $\log K<\beta<\log K+\varepsilon p$. It is obvious that $\mu(X)<\infty$ and $R_p<\infty$. More precisely, since $\log K<\beta<\log K+\varepsilon p$ we have that 
\[\mu(X)=\int_0^\infty \mu(t)K^{j(t)}dt=\int_0^\infty e^{-(\beta -\log K)j(t)}dt<\infty
\]
and
\[R_p=\int_0^\infty \lambda(t)^{\frac{p}{p-1}}\mu(t)^{\frac{1}{1-p}}K^{\frac{j(t)}{1-p}}dt=\int_0^\infty e^{\frac{(\beta-\log K-\varepsilon p)j(t)}{p-1}}dt<\infty.
\]As $R_p<\infty$, by Theorem \ref{main theorem}, it follows that $(X,d,\mu)$ is a noncomplete $p$-hyperbolic metric measure space. By \cite[Section 3 and Section 4]{BBGS} or \cite[Section 2]{PW19} for $\log K<\beta$, we obtain that $(X,d,\mu)$ is doubling and supports a $1$-Poincar\'e inequality.

For the $p$-parabolic case, let $\mu(t)=e^{-\beta j(t)}$ and $\lambda(t)=e^{-\varepsilon j(t)}$ with $\varepsilon,\beta>0$ and $\beta=\log K+\varepsilon p$. It is easy to see that 
$(X,d,\mu)$ is a noncomplete $p$-parabolic $K$-regular tree that is doubling and supports a $1$-Poincar\'e inequality.
\section*{Acknowledgement}
The author thanks his advisor Professor Pekka Koskela for helpful discussions. The author also would like to thank Zhuang Wang for reading the manuscript and giving comments that helped to improve the paper. 
The author is extremely grateful to Professor Jana Bj\"orn and the referees for their comments, suggestions, and corrections.
	

\begin{thebibliography}{99}		
		\bibitem{AS60} Ahlfors, Lars Valerian; Sario, Leo \emph{Riemann surfaces}. Princeton mathematical series 26, Princeton Univ. Press, 1960. 
		
		\bibitem{note}  Bonk, Mario; Capogna, Luca; Haj\l asz, Piotr; Shanmugalingam, Nageswari; Tyson, Jeremy \emph{Analysis in metric spaces.} Notices Amer. Math. Soc. 67 (2020), no. 2, $253$-$256$. 

\bibitem{BB11} Bj\"orn, Anders; Bj\"orn, Jana \emph{Nonlinear potential theory on metric spaces.} EMS Tracts in Mathematics, 17. European Mathematical Society (EMS), Z\"urich, 2011.

	\bibitem{BBGS} Bj\"orn, Anders; Bj\"orn, Jana; Gill, James T.; Shanmugalingam, Nageswari \emph{Geometric analysis on Cantor sets and trees.} J. Reine Angew. Math. 725 (2017), $63$-$114$.
		
	\bibitem{C99}  Cheeger, J. \emph{Differentiability of Lipschitz functions on metric measure spaces.} Geom. Funct. Anal. 9 (1999), no. 3, $428$-$517$.
		
		\bibitem{GM99}  Gol'dshtein, Vladimir; Troyanov, Marc \emph{The Kelvin-Nevanlinna-Royden criterion for $p$-parabolicity.} Math. Z. 232 (1999), no. 4, $607$-$619$.
		
		\bibitem{G99}   Grigor'yan, Alexander \emph{Analytic and geometric background of recurrence and non-explosion of the Brownian motion on Riemannian manifolds.} Bull. Amer. Math. Soc. (N.S.) 36 (1999), no. 2, $135$-$249$. 
		
		\bibitem{H03} Haj\l asz, Piotr \emph{Sobolev spaces on metric-measure spaces. Heat kernels and analysis on manifolds, graphs, and metric spaces (Paris, 2002)}, $173$-$218$, Contemp. Math., 338, Amer. Math. Soc., Providence, RI, 2003. 
		
		
		\bibitem{tero}Heinonen, Juha; Kilpel\"ainen, Tero; Martio, Olli \emph{Nonlinear potential theory of degenerate elliptic equations.} Unabridged republication of the 1993 original. Dover Publications, Inc., Mineola, NY, 2006. 
		
		\bibitem{HK98}  Heinonen, Juha; Koskela, Pekka \emph{Quasiconformal maps in metric spaces with controlled geometry.} Acta Math. 181 (1998), no. 1, $1$-$61$. 
		
		\bibitem{HKST15}  Heinonen, Juha; Koskela, Pekka; Shanmugalingam, Nageswari; Tyson, Jeremy T. \emph{Sobolev spaces on metric measure spaces. An approach based on upper gradients.} New Mathematical Monographs, 27. Cambridge University Press, Cambridge, 2015.
				
		\bibitem{H90} Holopainen, Ilkka \emph{Nonlinear potential theory and quasiregular mappings on Riemannian manifolds.} Ann. Acad. Sci. Fenn. Ser. A I Math. Dissertationes No. 74 (1990), 45 pp.
		
		\bibitem{H91} Holopainen, Ilkka \emph{Positive solutions of quasilinear elliptic equations on Riemannian manifolds.} Proc. London Math. Soc. (3) 65 (1992), no. 3, $651$-$672$.
		
		\bibitem{H3} Holopainen, Ilkka \emph{Volume growth, Green's functions, and parabolicity of ends.} Duke Math. J. 97 (1999), no. 2, $319$-$346$.
		
		\bibitem{Pekka01} Holopainen, Ilkka; Koskela, Pekka \emph{Volume growth and parabolicity.} Proc. Amer. Math. Soc. 129 (2001), no. 11, $3425$-$3435$.
		
		\bibitem{HMP} Holopainen, Ilkka; Markvorsen, Steen; Palmer, Vicente \emph{$p$-capacity and $p$-hyperbolicity of submanifolds.}  Rev. Mat. Iberoam. 25 (2009), no. 2, $709$-$738$.
		
		\bibitem{H93} Holopainen, Ilkka; Rickman, Seppo \emph{Classification of Riemannian manifolds in nonlinear potential theory.} Potential Anal. 2 (1993), no. 1, $37$-$66$. 
		
		
		\bibitem{KM2003}  Kinnunen, Juha; Martio, Olli \emph{Sobolev space properties of superharmonic functions on metric spaces.} Results Math. 44 (2003), no. 1-2, 114–129.
		
		\bibitem{KS01} Kinnunen, Juha; Shanmugalingam, Nageswari \emph{Regularity of quasi-minimizers on metric spaces.} Manuscripta Math. 105 (2001), no. 3, $401$-$423$.

\bibitem{pekkakhanhzhuang} Koskela, Pekka; Nguyen, Khanh; Wang, Zhuang \emph{Trace operators on regular trees.} Anal. Geom. Metr. Spaces 8 (2020), no. 1, 396–409.
		
		\bibitem{PKW} Koskela, Pekka; Nguyen, Khanh; Wang, Zhuang \emph{Trace and density results on regular trees}. to appear in Potential Anal. DOI: \href{https://doi.org/10.1007/s11118-021-09907-2}{https://doi.org/10.1007/s11118-021-09907-2}
		
		\bibitem{PW19} Koskela, Pekka; Wang, Zhuang \emph{Dyadic norm Besov-type spaces as trace spaces on regular trees.} Potential Anal. 53 (2020), no. 4, 1317-1346.
		
		\bibitem{KZ} Nguyen, Khanh; Wang, Zhuang \emph{Admissibility versus $\text{\rm A}_p$-conditions on regular trees.}   Anal. Geom. Metr. Spaces 8 (2020), no. 1, 92 -105.
		
		
		
		\bibitem{N00} Shanmugalingam, Nageswari \emph{Newtonian spaces: an extension of Sobolev spaces to metric measure spaces.} Rev. Mat. Iberoamericana 16 (2000), no. 2, $243$-$279$.
		
		\bibitem{Varo}  Varopoulos, N. T. \emph{Potential theory and diffusion on Riemannian manifolds.} Conference on harmonic analysis in honor of Antoni Zygmund, Vol. I, II (Chicago, Ill., 1981), 821–837, Wadsworth Math. Ser., Wadsworth, Belmont, CA, 1983.
		
		
		\bibitem{T99} Troyanov, M. \emph{Parabolicity of manifolds.} Siberian Adv. Math. 9 (1999), no. 4, $125$-$150$.
		
		
		
		\bibitem{Y77} Yamasaki, Maretsugu
		\emph{Parabolic and hyperbolic infinite networks.} Hiroshima Math. J. 7 (1977), no. 1, $135$-$146$.

\bibitem{Wang} Wang, Zhuang \emph{Characterization of trace spaces on regular trees via dyadic norms.}  J. Math. Anal. Appl. 494 (2021), no. 2, 124646, 21 pp.

	\end{thebibliography}
	\end{document}